\documentclass[lm,a4paper,twoside,fleqn,leqno,sc,color]{pre} 

\usepackage[applemac]{inputenc}
\usepackage{math}

\newtheorem{ml}[thm]{Main Lemma}
\newtheorem*{ga}{General Assumption}
\newtheorem*{ssl}{Siegel-Sternberg linearization theorem}
\newtheorem*{cl}{Counting Lemma}

\let\ab\ang
\let\cle\curlyeqprec
\let\vle\eqslantless
\let\down\shortdownarrow
\let\pcmp\leftslice
\let\msub\sqsubset

\def\Td{\dot T}
\def\Md{\dot M}
\def\mb{\breve m}
\def\mt{\tilde m}
\def\Mb{\breve M}
\def\xs{\acute x}
\def\mp{\acute m}
\def\vq{\acute s}
\def\rn{^{1/n}}
\def\nb{\bar n}
\def\hd{\hslash}

\def\rbr{r,\bots,r}

\def\fdb{\textsc{fdb}\xspace}
\def\asm{\textsc{asm}\xspace}

\def\qtt#1{›#1‹}
\def\lox#1{\vph{x}_{\scriptstyle#1}}

\begin{document}

\title     {On the Siegel-Sternberg linearization theorem}
\author    {Jürgen Pöschel}
\date      {Version 4.1, Februar 2017
			}
\address   {Institut für Analysis, Dynamik und Optimierung\\
            Universität Stuttgart, Pfaffenwaldring 57, D-70569 Stuttgart\\poschel@mac.com}
\annotation{In memory of Tommy 1999--2017}
\maketitle

\begin{abstract}
We establish a general version of the Siegel-Sternberg linearization theorem for ultradiffentiable maps which includes the analytic case, the smooth case and the Gevrey case. It may be regarded as a small divisior theorem without small divisor conditions. Along the way we give an exact characterization of those classes of ultradifferentiable maps which are closed under composition, and reprove regularity results for solutions of ode's and pde's. 
This will open up new directions in \textsc{kam}-theory and other applications of ultradifferentiable functions.
\end{abstract}

We consider the problem of linearizing a map~$g$ in the neighbourhood of a fixed point. Placing this fixed point at the origin we write
\[
  g = \Lm+\gh,
\] 
where $\Lm$ denotes its linear part and $\gh$ comprises its nonlinear terms. We then look for a diffeomorphism
$ \ph = \id+\phh $
around the origin such that
\[
  \ph\inv \cmp g\cmp \ph = \Lm.
\]
It is well known that any solution to this problem depends on the eigenvalues of its linearization. Let $g$ be a map in \m{s}-space, and let $\lm_1,\bots,\lm_s$ be the eigenvalues of~$\Lm$.
Within the category of formal power series there is always a formal solution to this problem, if the infinitely many \emph{nonresonance conditions}
\[
  \lm^k-\lm_i \ne 0, 
  \qq 
  \n{k}\ge2, \q 1\le i\le s,
\]
are satisfied, where 
\mmx
$\lm^k=\lm_1^{k_1}\cbots \lm_s^{k_s}$ and $\n{k}=k_1+\bots+k_s$
\[
  \lm^k=\lm_1^{k_1}\cbots \lm_s^{k_s},
  \qq
  \n{k}=k_1+\bots+k_s
\]
for $k\in\set{0,1,\bots}^s$.
Convergence of these formal solutions, however, can be established only if the map is analytic and certain strong \emph{small divisor conditions} are satisfied, such as
\[
  \n*{\lm^k-\lm_i} \ge \frac{\gm}{\n{k}^\ta}, 
  \qq 
  \n{k}\ge2, \q 1\le i\le s,
\]
with some $\gm>0$ and large $\ta$. This is the celebrated result of Siegel~\cite{Sie-42,Sie-52}, who was the first to overcome those small divisor difficulties.
Later, those small divisor conditions were considerably relaxed by Bruno and Rüssmann~\cite{Bru-72,Rus-80}. On the other hand, it is also well known that bad small divisors destroy analyticity.

A few years later, Sternberg~\cite{Stb-58} established the same result within the smooth category without any small divisor conditions. More precisely, if $g$ is real, smooth and nonresonant, then $g$ can be linearized by a smooth diffeomorphism around the fixed point. 
The construction proceeds in two steps. First, the nonlinearity is removed up to a flat term using nonresonance. Then, the latter is removed using the hyperbolicity of $\Lm$, which is a consequence of nonresonance and reality. 
So it appears »that small divisors are invisible in the smooth category«~\cite{Sto-13}.

The purpose of this paper is to prove that this is not the case.
The smooth category by itself is just too indifferent to discern small divisiors. But looking more closely in terms of classes of ultradifferentiable functions one can clearly quantify the effect of near resonances no matter how small they get how fast. The results of Siegel and Sternberg are then two instances of one and the same general theorem, as are all other results in this category. 

Ultradifferentiable functions form subclasses of smooth functions that are described by growth restrictions on their derivatives. Those restrictions are given in terms of a sequence of positive real numbers that serve as weights for those derivatives.
More specifically, if
\mms $
  m=(m_n)_{n\ge1}
$ 
is a sequence of positive numbers, then a smooth function $f$ on some \m{s}-dim\-en\-sional domain is said to be of class $E^m$, if for any point in the domain of $f$ there is a neighborhood~$U$ and a positive number $r>0$ such that
\[
  \sup_{x\in U}\, \sup_{n\ge1} \frac{1}{n\fac}\frac{\nn*{D^n\f(x)}}{m_n}\,r^n < \iny.
\]
Those classes are also called local Denjoy-Carleman classes of Roumieu type and are often denoted by $\mathscr{E}^{\set{m}}$ or ${C}{\set{m}}$.
Best known among those are the Gevrey classes $G^s$ defined in terms of the weights
\[
  m = (n\fac^{s-1}), \qq s>1.
\] 
Indeed, a version of the Sternberg theorem was recently proven by Sto\-lo\-vitch~\cite{Sto-13} for Gevrey maps.

To state the general result we measure the size of near-resonances in terms of their \emph{nonresonance function} $\Om$ defined by
\[
  \Om(q) = \max_{2\le\n{k}\le q} \max_{1\le i\le s} \n*{\lm^k-\lm_i}\inv, \qq q\ge2.
\]
We say that a weight $w$ \emph{dominates a nonresonance function} $\Om$, if there exists a constant~$a$ such that
\[
  \sum_{\nu\le\log_2\n{k}} \frac{\log \Om(2^{\nu+1})}{2^\nu} \le a + \frac{\log w_k}{\n{k}},
  \qq
  \n{k}\ge2.
\]
The point is that \emph{any} nonresonant function can be dominated by an appropriate weight.
Hence, the following theorem may be regarded as a small divisior theorem without explicit small divisior conditions.

\begin{ssl}
Consider a smooth map~$g$ of class $E^m$ in a neighbourhood of a fixed point. If its linearization at the fixed point is nonresonant, then the map can be linearized by a local diffeomorphism~$\ph$ of class $E^{m\star w}$, where
$w$ is any weight dominating the associated nonresonance function $\Om$ such that $m\star w$ is log-convex and strongly non-analytic.
\end{ssl}

Here, $m\star w = (w_km_k)_{k\ge0}$ is log-convex, if the logs of the weights are convex with respect to~$k$. Strongly non-analytic weights are defined in section~~\ref{part-1}. Both conditions are nothing more than weak growth conditions.

This theorem comprises all versions of the Sternberg linearization theorem established so far. We discuss the most relevant cases. 

\head{No small divisors}
In this case, $\Om$ is bounded. 
This amounts to the classical theorem of Poincaré \cite{Poi-79} and is a particularly simple instance of the next case.

\head{Good small divisors}
If the eigenvalues of $\Lm$ satisfy small divisor conditions of Siegel or Bruno-Rüssmann type, then
\[
  \sum_{\nu\ge0} \frac{\log \Om(2^{\nu+1})}{2^\nu} < \iny.
\]
Indeed, this is the general \emph{definition} of admissible small divisors for convergent majorant techniques as introduced by Bruno~\cite{Bru-72}. In this case, we simply choose 
the constant weight
\mms $w=(1)$. 
So the normalizing transformation $\ph$ is of the \emph{same class} $E^m$ as the map~$g$. This applies to the  analytic category~$C^\om$ -- which amounts to the classical results of Siegel~\cite{Sie-42,Sie-52} --, the Gevrey category~$G^s$ -- see Stolovitch~\cite{Sto-13} --, and \emph{any other} \fdb space $E^m$ as defined in Lemma~\ref{prop-fdb}.

\head{Gevrey small divisors}
This amounts to the existence of a $\dl>0$, so that 
\[
  \sum_{1\le\nu\le\log_2\n{k}} \frac{\log \Om(2^{\nu+1})}{2^\nu} \le a + \dl \log\n{k}
\]
for almost all $k$.
In this case, we can choose
$
  w = (k\fac^\dl)
$.
So if $g$ is of Gevrey class $G^s$, then $\ph$ is of Gevrey class $G^{s+\dl}$ -- see again~\cite{Sto-13}. But the same loss of regularity is observed in any other \fdb space $E^m$.

\head{Arbitrarily small divisor}
The theorem also applies to the case where no small divisor estimate and no smoothness class are given at all.
\emph{Any} smooth map $g$ is of \emph{some} class~$E^m$, since we only need to choose an appropriate weight $m$ in dependence on the growth of the derivatives of~$g$. Moreover, there always is \emph{some} weight $w$ dominating the associated resonance function~$\Om$. Increasing $w$ if necessary, $m\star w$ is log-convex and strongly non-analytic. Hence, the theorem applies also in this case and amounts to the general Sternberg theorem with additional quantitative information.

\paragraph{Outline}
An indispensable prerequisite for doing analysis within spaces of ultradifferentiable functions is their stability with respect to composition. Partial results are well known and rely on the Faà di Bruno formula for higher derivatives of composite functions and the assumption that derivation is well behaved. The latter, however, amounts to a severe growth restriction on the weights~$w$. The essential step is to completely remove the latter restriction and to give an exact description of those spaces. The proof is also much simpler and works by considering formal power series only.
As an application of this approach we reprove regularity results for solutions of ode's and pde's 
without employing tedious estimates.

The proof of the Siegel-Sternberg theorem then consists of two parts. First, a small divisor problem is solved to linearize the map $g$ up to a flat term. But working within the category of ultradifferentiable functions it is not necessary to use any bounds on those divsisors. It suffices to keep control of their effect and proliferation.
Subsequently, hyperbolicity is used to remove the flat term. Here, we transfer the classical approach to the proper $E^m$ classes using heavily their closedness under composition und flows and also using a version of the Whitney extension theorem.

\paragraph{Acknowledgement}
It is a pleasure to thank Gerhard Schindl for carefully reading a preliminary version of this manuscript and pointing out some errors.

\section{Ultradifferentiable functions and maps}

First consider complex valued functions.
With any smooth complex valued function~$f$ defined in a neighborhood of a point~$a$ in real \m{s}-space we associate its formal Taylor series expansion at~$a$,
\[
  T_af \defeq \sum_{k\in\Lm} f_k(a)x^k,
\]
where as usual 
\[
  f_k 
  \defeq \frac{1}{k\fac} \del^k\f
  \defeq \frac{1}{k_1!\cbots k_s!}\,\del_{x_1}^{k_1\vph{k_s}}\cbots\del_{x_s}^{k_s} f,
  \qq
  x^k = x_1^{k_1}\cbots x_s^{k_s}
\]
for \m{s}-dimensional multiindices $k$ in $\Lm = \set{0,1,\bots}^s$.
As constant terms won't play a role in our considerations, we also let
\[
  \Td_a f \defeq T_af-f_0 = \sum_{k\ne0} f_k(a)x^k.
\]

A \emph{weight} is any map
\mmx
$m\maps \Lm \to (0,\iny)$.
\[
  m\maps \Lm \to (0,\iny).
\]
The \emph{weighted Taylor series} of $f$ with and without constant term are then defined as
\[
  M^m_a\f \defeq \sum_{k\in\Lm} \frac{\n{f_k(a)}}{m_k}x^k,
  \qq
  \Md^m_a\f \defeq M^m_a\f-f_0,
\]
respectively, and we set
\[
  \nn{f}^m_{a,r} 
  \defeq \sum_{k\ne0} \frac{\n{f_k(a)}}{m_k} r^{\n{k}}
  = \Md^m_af(\rbr).
\]
Obviously, $f$ is of class $E^m$ if and only if any point in the domain of $f$ has a neighborhood~$U$ and a positive number $r>0$ such that
\[
  \nn{f}^m_{U,r} \defeq \sup_{a\in U} \nn{f}^m_{a,r} < \iny.
\]
Note that we do not take into account a constant term, so these are only semi-norms.

Here are some standard examples.
In the one-dimensional case the constant sequence $m=(1)_{n\ge0}$ 
defines the class of analytic functions on open subsets of the real line,
\mms$
  C^\om = E^{(1)}
$.
More generally, as shown in Appendix~\ref{app-1},
\[
  C^\om = E^m \eequi 0<\inf m_n\rn \le \sup m_n\rn < \iny.
\]
For $m=(n\fac^{s-1})_{n\ge1}$ we obtain the Gevrey spaces~\cite{Gev-18} 
\[
  G^{s} = E^{(n\fac^{s-1})}, \qq s>1,
\]
well known in pde theory.

These examples naturally extend to the multi-dimensional case. Here, one usually considers weights as functions of $\n{k}=k_1+\bots+k_s$ rather than~$k$. For instance,
\[
  G^{s} = E^{(k\fac^{s-1})} = E^{(\n{k}\fac^{s-1})}
\]
by standard inequalities for the factorial. But with true multi-dimensional weights one may also consider functions with anisotropic differentiability properties -- see for example \cite{CaG-09} and section~\ref{flows}. 

We also need to consider smooth maps from \m{s}-space into some \m{\vq}-space. For $f=(f_1,\bots,f_{\vq})$ we set
\[
  \nn{f}^m_{a,r} \defeq \max_{1\le i\le\vq} \nn{f_i}^m_{a,r}.
\]
The Taylor coefficients $f_k$ of $f$ are \m{\vq}-vectors. Defining
\[
  \Md^m_a\f \defeq \sum_{k\ne0} \frac{\ab{f_k(a)}}{m_k}x^k,
  \qq
  \ab{f_k} \defeq (\n*{f_{k,1}},\bots,\n*{f_{k,\vq}}),
\]
and denoting the usual sup-norm by $\nny\cd$ we have
\[
  \nn{f}^m_{a,r} = \nn*{\Md^m_af(\rbr)}_\iny.
\]
In either case, $f$ is of class~$E^m$ if and only if $\Md^m_a\f$ is analytic on a \m{s}-dimen\-sional polydisc with a radius that can be chosen locally constant. 

\section{Basic properties and assumptions}  \label{basic}

For the time being we focus on one-dimensional weights. Multi-dimensional weights will be considered again in section~\ref{comps}.

Certain properties of the spaces $E^m$ are more directly connected with the sequence $M=(M_n)_{n\ge1}$ of the associated weights
\[
  M_n \defeq n\fac m_n, \qq n\ge1,
\]
controlling the derivatives $f^{(n)}$ rather than the Taylor coefficients~$f_n$.
If
\[
  A \defeq \liminf M_n\rn < \iny,
\]
then $E^m$ is a proper subspace of $C^\om$ and not closed under composition of maps -- see Appendix~\ref{app-1}. Hence we will assume that $A=\iny$.
In this case one always has
\[
  E^m = E^{\mb},
\]
where $\mb$ is characerized by the fact that $\Mb$ is the largest log-convex minorant of~$M$. 
That is, $\Mb_n^2 \le \Mb_{n-1}\Mb_{n+1}$, which is tantamount to $\Mb_{n}/\Mb_{n-1}$ forming an increasing sequence \cite{Ban-46,Man-52}.
---
From now on we therefore make the following

\begin{ga}
The weights $m=(m_n)_{n\ge1}$ are \qtt{weakly log-convex}:
\[
  m_n = \frac{M_n}{n\fac}
\]
with a log-convex sequence $M=(M_n)_{n\ge1}$ so that $M_n = \mu_1\cbots\mu_n$ with an increasing sequence
\[
  0 < \mu_1 \le \mu_2 \le \bots\en.
\]
As a consequence, $M_n\rn$ is increasing and
\mmx
$M_n\rn\up\iny$ if and only if $\mu_n\up\iny$.
\[
  M_n\rn \up \iny \eequi \mu_n \up \iny.
\]
\end{ga}

Under this assumption a weight $m$ is always \qtt{weakly submultiplicative}: we have
$M_kM_l\le M_{k+l}$ and thus, by the binomial formula,
\[
  m_km_l = \frac{M_kM_l}{k\fac\,l\fac}
  \le \frac{(k+l)\fac}{k\fac\,l\fac} \frac{M_{k+l}}{(k+l)\fac}
  \le 2^{k+l} m_{k+l},
  \qq
  k,l\ge1.
\]
As a consequence, $E^m$ is always an algebra. But note that $m$ is not necessarily almost submultiplicative as defined in Lemma~\ref{prop-inv}.

Another important consequence \opt{of this assumption}
is the existence of so called characteristic \m{E^m}-functions. The following lemma is well known, as is its proof \cite{Ban-46,Jaf-15}. We state it for functions on an interval.

\begin{lem} \label{char-func}
Under the general assumption the space $E^m$ contains for any given point $o$ in the interval under consideration a function $\eta$ such that $\eta(o)=0$ and
\[
  \eta_n(o) = \j^{n-1}s_n, \qq s_n \ge m_n , \qq n\ge1.
\]
\end{lem}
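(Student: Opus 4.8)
The plan is to construct $\eta$ explicitly as a lacunary power series and then verify that its Taylor coefficients can be forced to dominate the $m_n$. First I would normalize so that $o=0$ (translating the interval). The natural candidate is a series of the form
\[
  \eta(x) = \sum_{j\ge0} a_j\, x^{n_j},
\]
where $1=n_0<n_1<n_2<\dots$ is a rapidly increasing sequence of exponents (for instance $n_{j+1}\ge 2n_j$, so that the blocks do not interact) and the coefficients $a_j>0$ are chosen so that the coefficient of $x^{n_j}$, namely $a_j$, equals some prescribed $s_{n_j}\ge m_{n_j}$. For the exponents $n$ that are \emph{not} of the form $n_j$ the coefficient is $0$, which is not $\ge m_n>0$; so in fact one does not take a lacunary series but rather keeps \emph{all} powers and simply chooses every coefficient. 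The cleaner construction: set $s_n\defeq \sup_{k\le n} m_k = m_n$ (using that under the General Assumption $m_n$ need not be monotone, but $M_n\rn$ is, so one can take $s_n$ to be a convenient majorant of $m_n$ that is itself a valid weight), and define
\[
  \eta(x) = \sum_{n\ge1} s_n\, x^{n}.
\]
Then $\eta(0)=0$, and $\eta_n(0)=s_n\ge m_n$ for all $n\ge1$, so the stated coefficient condition with $\j=1$ (or absorbing the harmless factor $\j^{n-1}$) holds by construction.

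The real content is to check that this $\eta$ actually lies in $E^m$, i.e.\ that $\nn{\eta}^m_{U,r}<\iny$ for some neighbourhood $U$ of $0$ and some $r>0$. Since $\eta$ is a power series centred at $0$, its Taylor coefficients at a nearby point $a$ are obtained by re-expanding; one estimates $\n{\eta_k(a)}$ by $\sum_{n\ge k}\binom{n}{k}s_n\n{a}^{n-k}$, and then
\[
  \nn{\eta}^m_{a,r} = \sum_{k\ge1}\frac{\n{\eta_k(a)}}{m_k}r^{k}.
\]
Here I would use weak submultiplicativity of $m$ (established just before the lemma: $m_km_l\le 2^{k+l}m_{k+l}$, equivalently $1/m_k \le 2^{n}\,m_{n-k}/m_n$ for $k\le n$) together with $s_n=m_n$ to bound $s_n/m_k \le 2^{n} m_{n-k}$, and since $m_{n-k}\le C R^{n-k}$ on compact pieces is \emph{not} automatic — this is where log-convexity enters. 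Using that $M_n=\mu_1\cdots\mu_n$ with $\mu_n\up\iny$ (the case $A=\iny$), one has for each fixed small radius a geometric-type decay of the relevant ratios, and the double sum over $k$ and $n$ converges provided $\n{a}$ and $r$ are small enough. Concretely: choose $U=\set{\n{a}<\rho}$ and $r$ so small that $2^{2}(\rho+r)$ is less than the eventual lower bound coming from $\mu_n\to\iny$; then each geometric series converges and their sum is finite, uniformly in $a\in U$.

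I expect the main obstacle to be precisely this convergence estimate: reconciling the demand that the coefficients be \emph{large} (at least $m_n$, hence making $\eta$ "as non-smooth as the class allows") with the demand that $\eta$ still belong to $E^m$. The log-convexity of $M$ is what makes this possible — it guarantees that $m_n$ does not grow too irregularly, so that the worst-case coefficient $s_n=m_n$ still produces a convergent weighted Taylor series after the re-expansion at a nearby point. A secondary, more bookkeeping obstacle is handling the re-expansion of the power series at points $a\ne 0$ rather than just at $0$; this is needed because $E^m$ is defined by a \emph{local} condition (sup over a neighbourhood $U$), not just a condition at the single point $o$. The binomial-coefficient bookkeeping combined with weak submultiplicativity, however, reduces this to the single-point estimate at the cost of replacing $r$ by $r+\rho$ and an extra factor $2^{n}$, both harmless after shrinking the neighbourhood. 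Finally, the factor $\j^{n-1}$ in the statement is there only to allow a convenient rescaling $x\mapsto \j^{-1}x$ if one wants $\eta$ to be defined on a unit interval; I would simply note that replacing $\eta(x)$ by $\eta(\j x)$ multiplies $\eta_n(o)$ by $\j^n$ and stays in the same class $E^m$, so the stated form follows.
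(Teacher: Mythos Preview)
Your approach has a fatal gap: the power series $\eta(x)=\sum_{n\ge1} s_n x^n$ with $s_n\ge m_n$ typically has radius of convergence zero, so it does not define a function at all. In any genuinely non-analytic class --- for instance Gevrey, $m_n=n!^{\,s-1}$ with $s>1$ --- one has $m_n^{1/n}\to\infty$, hence $\sum m_n x^n$ diverges for every $x\ne0$. Your re-expansion estimate $\n{\eta_k(a)}\le\sum_{n\ge k}\binom{n}{k}s_n\n{a}^{n-k}$ is then an inequality between two infinite quantities, and neither weak submultiplicativity nor log-convexity of $M$ can rescue it: the issue is not that the weighted Taylor norm is hard to bound, but that there is no smooth function on any interval whose Taylor series one could take. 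The whole point of the lemma is precisely that a characteristic function exists \emph{despite} the coefficients $m_n$ being too large to sit inside a convergent power series.

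The paper's construction avoids power series entirely. One sets $\ph(x)=\sum_{\nu\ge1}2^{-\nu}T_\nu^{-1}\e^{\j\mu_\nu x}$ with $T_\nu=\mu_\nu^\nu/M_\nu$, a sum of bounded complex exponentials. Since $\n{\e^{\j\mu_\nu x}}=1$ on the real line, the $n$-th derivative is bounded \emph{uniformly in $x$} by $\sum_\nu 2^{-\nu}\mu_\nu^n/T_\nu$, and log-convexity of $M$ gives $\mu_\nu^n/T_\nu\le M_n$ for every $\nu$; hence $\nn*{\ph^{(n)}}\le M_n$ and $\ph\in E^m$ globally, with no re-expansion needed. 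At the origin the $n$-th Taylor coefficient is $\j^n s_n$ with $s_n\ge 2^{-n}m_n$, and the rescaling $x\mapsto2x$ absorbs the $2^{-n}$. Note also that $\j$ here is the imaginary unit, not a free scaling parameter: the factor $\j^{n-1}$ is the phase produced by differentiating $\e^{\j\mu x}$, and the resulting $\eta$ is complex-valued.
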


\begin{proof}  \let\rh=\sg
Set $T_n = \mu_n^n/M_n$. We then have
\[
  T_n = \sup_{k>0} \frac{\mu_n^k}{M_k},
\]
as $\mu_n/\mu_k\ge 1$ for $k\le n$ and $\mu_n/\mu_k\le 1$ for $k\ge n$.
Now assume for simplicity that $o$ is the origin on the real line and define $\rh_\nu$ by
\[
  \rh_\nu(x) = \frac{\e^{\j\mu_\nu x}}{T_\nu}, \qq \nu\ge1.
\]
Its \m{n}-th derivative is uniformly bounded by
\[
  \nn*{\rh_\nu^{(n)}} \le \frac{\mu_\nu^n}{T_\nu} \le M_n, \qq n\ge1.
\]
Hence, if we define $\ph$ by
\[
  \ph(x) = \sum_{\nu\ge1} 2^{-\nu}\rh_\nu(x) ,
\]
then $\nn*{\ph^{(n)}}\le M_n$ and $\nn*{\ph_n}\le M_n/n\fac = m_n$ for all $n\ge1$. Hence $\ph$ is in~$E^m$, and its Taylor coefficients at zero are
\[
  \ph_n = \j^n s_n,  \qq n\ge 1,
\]
with
\[
  s_n
  =   \frac{1}{n\fac} \sum_{\nu\ge1} \frac{1}{2^\nu} \frac{\mu_\nu^n}{T_\nu}
  \ge \frac{1}{n\fac} \frac{1}{2^n} \frac{\mu_n^n}{T_n}
  =   \frac{1}{2^n} \frac{M_n}{n\fac}
  =   \frac{m_n}{2^n}.
\]
So the rescaled function $\eta=-\j\ph\cmp 2-\eta_0$ with a suitable constant $\eta_0$ 
has all the required properties.
\end{proof}

We note that the proof does not make use of the assumption that $\mu_n\to\iny$. But if not, then $E^m$ consists of analytic functions, and the result is trivial.

From the existence of characteristic functions it follows that
\[
  E^{\mt} \subset E^m \eequi \mt\msub m, 
\]
where the latter stands for $\mt_n\le\lm^n m_n$ for all $n\ge1$ with some $\lm\ge1$.
Consequently,
\[
  E^{\mt} = E^{m} \eequi \mt \asymp m \defequi \mt\msub m \land m\msub\mt.
\]
Obviously, $\asymp$ is an equivalence relation among weights, identifying all weights which define the same \m{E}-space.

\section{Properties of weights}

All of the following properties pertain to the equivalence class of a weight, thus are properties of the associated spaces~$E^m$. The corresponding analytical properties will be discussed later. 

We will use Stirling's inequality in the form
\[
  \frac{n}{\e} \le n\fac\rn \le \frac{2n}{\e},
  \qq n\ge2.
\]
For instance, as $M_n\rn$ is increasing by the general assumption it follows that
\[
  •mkml
  \frac{{m_k}^{1/k}}{{m_l}^{1/l}}
  = \frac{l\fac^{1/l}}{k\fac^{1/k}} \frac{{M_k}^{1/k}}{{M_l}^{1/l}}
  \le \frac{l\fac^{1/l}}{k\fac^{1/k}}
  \le \frac{2l}{k},
  \qq
  1\le k\le l.
\]
We will use this estimate in the next proof.

\begin{lem} \label{prop-inv}
The following two properties are equivalent.
\enum[z]
\item•1
$m$ is \qtt{almost increasing}:\en there is a $\lm\ge1$ such that
\[
  m_k^{1/k} \le \lm m_l^{1/l},
  \qq
  1\le k\le l.
\]
\item•2
$m$ is \qtt{almost submultiplicative} or \qtt{{\asm}}:\en there is a $\lm\ge1$ such that 
\[
  m_{k_1}\cbots m_{k_r} \le \lm^k m_k, \qq k=k_1+\bots+k_r,
\]
for any choice of\/ $r\ge1$ and $k_1,\bots,k_r\ge1$.
\endenum
\end{lem}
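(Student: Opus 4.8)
The plan is to prove the two implications separately, exploiting the Stirling estimate and the identity $m_n = M_n/n!$ established under the general assumption.

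\medskip
\textbf{(1) $\Rightarrow$ (2).} First I would reduce the submultiplicativity statement to the two-factor case by induction: if $m_{k}m_{l}\le \lm'^{\,k+l} m_{k+l}$ holds for all $k,l\ge 1$ with a single constant $\lm'$, then an easy induction on $r$ gives $m_{k_1}\cbots m_{k_r}\le \lm'^{\,k} m_k$ with the \emph{same} $\lm'$, since $\lm'^{\,k_1+\bots+k_{r-1}}\lm'^{\,k_r}=\lm'^{\,k}$. So it suffices to bound $m_k m_l$ for $k\le l$. Here I would write
\[
  m_k m_l = \bigl(m_k^{1/k}\bigr)^{k}\bigl(m_l^{1/l}\bigr)^{l},
\]
and use the almost-increasing hypothesis in the form $m_k^{1/k}\le \lm\, m_{k+l}^{1/(k+l)}$ and $m_l^{1/l}\le\lm\, m_{k+l}^{1/(k+l)}$ (legitimate since $k,l\le k+l$). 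This yields
\[
  m_k m_l \le \lm^{k+l}\bigl(m_{k+l}^{1/(k+l)}\bigr)^{k+l} = \lm^{k+l} m_{k+l},
\]
which is exactly the two-factor estimate with $\lm'=\lm$, and the induction closes the argument.

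\medskip
\textbf{(2) $\Rightarrow$ (1).} Conversely, assume {\asm}. Given $1\le k\le l$, I want to compare $m_k^{1/k}$ with $m_l^{1/l}$. The natural move is to apply submultiplicativity with $r$ equal copies of $k$, i.e.\ to $rk$ for a well-chosen $r$, but $l$ need not be a multiple of $k$; so I would write $l = rk + t$ with $0\le t<k$ (taking the copies $k,\bots,k,t$ when $t\ge1$, or just $r$ copies of $k$ when $t=0$), getting
\[
  m_k^{\,r}\, m_t \le \lm^{\,l} m_l
\]
(with the convention that the $m_t$ factor is absent and $l=rk$ when $t=0$). To turn this into a clean bound I need a lower bound on the ``garbage'' factor $m_t$ and to absorb the discrepancy between $rk$ and $l$; since $m_t = M_t/t!$ and $M$ is built from the increasing sequence $\mu_1\le\mu_2\le\cdots$, one has $m_t\ge m_1^{\,t}/(\text{something tame})$, and the index gap $l-rk<k$ contributes at most a bounded power. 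Taking $k$-th roots and then $l$-th roots and collecting the constants gives $m_k^{1/k}\le \lm'' m_l^{1/l}$ for a new constant $\lm''$ independent of $k,l$. I expect this bookkeeping — handling the remainder term $t$ and showing the extra factors are uniformly bounded using the log-convex/Stirling structure — to be the main obstacle; the displayed estimate $m_k^{1/k}/m_l^{1/l}\le 2l/k$ from the general assumption is exactly the kind of tame control that makes it go through, so I would lean on that rather than re-deriving everything.

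\medskip
In short, the direction (1)$\Rightarrow$(2) is the soft one (pure bookkeeping with a single constant, powered by the $1/k$-homogeneity of the exponents), while (2)$\Rightarrow$(1) requires a division-with-remainder argument and a uniform bound on the leftover weight $m_t$; the general assumption guarantees the latter. Both constants can be tracked explicitly, but since the statement only asserts existence of \emph{some} $\lm$, I would not optimize them.
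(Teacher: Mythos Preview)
Your direction (1)$\Rightarrow$(2) is correct and essentially the paper's argument; the paper just applies $m_{k_i}\le \lm^{k_i}m_k^{k_i/k}$ to all $r$ factors at once rather than reducing to two factors first, but that is cosmetic.

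The direction (2)$\Rightarrow$(1), however, is muddled in a way that matters. Your division-with-remainder produces $m_k^{\,r}m_t\le\lm^l m_l$ and then needs a \emph{lower} bound on $m_t$. The bound you suggest, $m_t\ge m_1^{\,t}/(\text{something tame})$, is in fact $m_t\ge \mu_1^{\,t}/t!$, and the $t!$ is \emph{not} tame: it is not $O(C^t)$, and in the regime $r=1$, $t\sim k$ this factor is roughly $(k/e)^k$, which swamps everything. Trying instead to control $m_k^{1/k}/m_t^{1/t}$ lands you back on the very almost-increasing inequality you are trying to prove (for the smaller pair $t<k$), so the argument becomes circular. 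The estimate $m_k^{1/k}/m_l^{1/l}\le 2l/k$ you cite goes in the wrong direction to rescue this: it gives an \emph{upper} bound on $m_t^{1/t}$ in terms of $m_k^{1/k}$, not the lower bound you need.

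The fix --- and this is what the paper does --- is to drop the remainder entirely. Use only $n$ equal copies of $k$: from $m_k^{\,n}\le\lm^{nk}m_{nk}$ you get the clean relation $m_k^{1/k}\le\lm\,m_{nk}^{1/nk}$. Now choose $n$ with $nk\le l<(n+1)k$ and bridge the gap from $nk$ to $l$ with the displayed estimate, which here reads $m_{nk}^{1/nk}\le (2l/nk)\,m_l^{1/l}\le 4\,m_l^{1/l}$. Combining gives $m_k^{1/k}\le 4\lm\,m_l^{1/l}$. So the right move is to land \emph{below} $l$ at a clean multiple and let the general assumption absorb the gap, rather than to hit $l$ exactly and try to control the leftover weight.
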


\begin{proof}
If $m$ is almost increasing and $k=k_1+\bots+k_r$, then
\[
  m_{k_i} \le \lm^{k_i} m_{k}^{k_i/k}, \qq 1\le i\le r.
\]
Taking the product over $1\le i\le r$ gives the second property. 
Conversely, if $m$ is \asm, then in particular
\mms $
  m_k^n \le \lm^{nk} m_{nk}
$
and thus
\[
  m_k^{1/k} \le \lm m_{nk}^{1/nk}, \qq n,k\ge1.
\]
Given $1\le k\le l$ and choosing $n\ge1$ so that $nk\le l\le nk+k$ we further conclude with~\eqref{mkml} that
\[
  m_{nk}^{1/nk} \le 4m_l^{1/l}.
\]
These two estimates together show that $m$ is almost increasing.
\end{proof}

In the next lemma, \qtt{{\fdb}} is short for \qtt{Faà di Bruno}. The property thus named is motivated by the composition rule for formal power series -- see the Main Lemma~\ref{comp-wm} -- and reflects the higher order chain rule named after Faà di Bruno~\cite{FdB-57}. The term was coined in~\cite{RaS-14}.

\begin{lem}  \label{prop-fdb}
Each of the following properties implies the next one.
\enum[z]
\item•1
$m$ is \qtt{log-convex}:\q $m_n^2\le m_{n-1}m_{n+1}$ for all $n\ge2$, or equivalently, 
\[
  m_n=\al_1\al_2\cbots\al_n, \qq 0<\al_1\le\al_2\le\bots\en .
\]
\item•2
$m$ is \qtt{block-convex}:\en 
\[
  \max_{n\le 2^\nu} \al_n \le \min_{n>2^\nu} \al_n, \qq \nu\ge0.
\]
\item•9
$m$ is \qtt{strongly submultiplicative}:\en there is a $\lm\ge1$ such that
\[
  m_km_l \le \lm m_{k+l-1}, \qq k,l\ge1.
\]
\item•4
$m$ is \qtt{strictly {\fdb}}:\en there is a $\lm\ge1$ so that for all $r\ge1$ and $k_1,\bots,k_r\ne0$,
\[
  m_rm_{k_1}\cbots m_{k_r} \le \lm^r m_{k_1+\bots+k_r}.
\]
\item•5
$m$ is \qtt{{\fdb}}:\en there is a $\lm\ge1$ so that for all $r\ge1$ and $k_1,\bots,k_r\ne0$,
\[
  m_rm_{k_1}\cbots m_{k_r} \le \lm^k m_k, \qq k=k_1+\bots+k_r.
\]
\endenum
\end{lem}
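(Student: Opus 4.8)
The plan is to prove the chain of implications one link at a time, in each case extracting the stated constant $\lm$ explicitly from the previous property. Throughout I work with the representation $m_n=\al_1\cbots\al_n$ with $\al_1\le\al_2\le\bots$ that comes for free from log-convexity, and I keep in mind that under the general assumption the $\al_n$ are increasing, so that suprema over initial segments are attained at the right endpoint.

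\emph{Log-convex $\Rightarrow$ block-convex.} This is immediate: since $(\al_n)$ is nondecreasing, for any $\nu\ge0$ we have $\max_{n\le2^\nu}\al_n=\al_{2^\nu}\le\al_{2^\nu+1}=\min_{n>2^\nu}\al_n$. (In fact any threshold works, not just $2^\nu$; the dyadic formulation is the one needed later for the composition lemma.)

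\emph{Block-convex $\Rightarrow$ strongly submultiplicative.} Here I want $m_km_l\le\lm m_{k+l-1}$, i.e. $\al_1\cbots\al_k\cdot\al_1\cbots\al_l\le\lm\,\al_1\cbots\al_{k+l-1}$. After cancelling $\al_1\cbots\al_{\min(k,l)}$ from both sides — say $k\le l$ — this reduces to $\al_1\cbots\al_k\le\lm\,\al_{l+1}\al_{l+2}\cbots\al_{k+l-1}$, a product of $k$ factors on the left against $k-1$ factors on the right. The right-hand factors all have index $>l\ge k$, hence index $>$ each left-hand index, so by block-convexity applied at the appropriate dyadic scales each left factor $\al_j$ with $j\le k$ is bounded by the smallest right factor $\al_{l+1}$; more carefully, pairing $\al_j$ ($1\le j\le k-1$) with $\al_{l+j}$ and noting $l+j>2^{\nu}$ whenever $j>2^{\nu}$... — the clean way is: for $1\le j\le k-1$, $\al_j\le\al_{l+j}$ since $j\le l+j$, giving $\al_1\cbots\al_{k-1}\le\al_{l+1}\cbots\al_{l+k-1}$, and the remaining factor $\al_k$ on the left is then absorbed into a constant $\lm$ only if $\al_k$ is bounded — which it need not be. So instead I pair $\al_j$ with $\al_{l+j-1}$ for $1\le j\le k$: then $j\le l+j-1$ (as $l\ge1$), hence $\al_j\le\al_{l+j-1}$, and multiplying over $j$ gives exactly $\al_1\cbots\al_k\le\al_l\al_{l+1}\cbots\al_{l+k-1}=m_{l+k-1}/m_{l-1}$, i.e. $m_km_l\le m_{k+l-1}\cdot(m_k/m_{l-1})\le m_{k+l-1}$ when $k\le l-1$; the finitely many boundary cases $k\in\{l,l\pm\text{small}\}$ and small $k,l$ are handled by a fixed constant. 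I expect this bookkeeping — getting the index shift right so that the leftover ratio is $\le1$ rather than unbounded — to be the main obstacle in the whole chain.

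\emph{Strongly submultiplicative $\Rightarrow$ strictly \fdb.} Iterate the two-term inequality: from $m_am_b\le\lm m_{a+b-1}$ one gets by induction on $r$ that $m_{k_1}\cbots m_{k_r}\le\lm^{r-1}m_{k_1+\bots+k_r-(r-1)}$. Since the weight is almost increasing (log-convex $\Rightarrow$ almost increasing, e.g. via Lemma~\ref{prop-inv} or directly since $m_n^{1/n}$ is a running geometric mean of the nondecreasing $\al_j$, hence nondecreasing), we have $m_{N-(r-1)}\le\lm'^{\,N}m_N$ — actually $m_{N-(r-1)}\le m_N$ outright when the $\al$'s are $\ge1$, and in general $m_{N-j}= m_N/(\al_{N-j+1}\cbots\al_N)$, which is $\le m_N$ once $\al_{N}\ge\cdots\ge\al_{N-j+1}$ are $\ge1$; the genuinely small-$\al$ regime is again swallowed by adjusting $\lm$. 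Finally the extra factor $m_r$: since $m_r^{1/r}\le\lm m_{k}^{1/k}$ for $r\le k=k_1+\bots+k_r$ (almost increasing, noting $r\le k$ as each $k_i\ge1$), we get $m_r\le\lm^r m_k^{r/k}\le\lm^r m_k$, so $m_r\cdot m_{k_1}\cbots m_{k_r}\le\lm^r m_k\cdot\lm^{r-1}m_k\le C^r m_k$ — wait, that double-counts $m_k$; instead bound $m_{k_1}\cbots m_{k_r}\le\lm^{r-1}m_{k-r+1}\le\lm^{r-1}m_k$ and then $m_r\le\lm^r$ is false in general, so I instead use $m_rm_{k_1}\cbots m_{k_r}\le m_r\lm^{r-1}m_{k-r+1}$ and bound $m_r m_{k-r+1}\le\lm m_{k}$ by strong submultiplicativity directly (with $a=r$, $b=k-r+1$, $a+b-1=k$), yielding $m_rm_{k_1}\cbots m_{k_r}\le\lm^r m_k$ as required.

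\emph{Strictly \fdb $\Rightarrow$ \fdb.} Trivial: $\lm^r\le\lm^{k}$ since $\lm\ge1$ and $r\le k=k_1+\bots+k_r$ (each $k_i\ge1$). This closes the chain.

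Summarizing, each step is elementary once the log-convex product representation is in hand; the only delicate point is the index arithmetic in the block-convex $\Rightarrow$ strongly submultiplicative step, where one must shift indices so that the unavoidable leftover ratio of $\al$'s is bounded rather than growing, and absorb the finitely many small-index exceptions into the constant $\lm$.
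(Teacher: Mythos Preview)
Your arguments for $1\Rightarrow 2$, for $3\Rightarrow 4$ (the final version, where you iterate strong submultiplicativity $r-1$ times on $m_{k_1}\cdots m_{k_r}$ and then once more to absorb $m_r$ via $m_r m_{k-r+1}\le\lm m_k$), and for $4\Rightarrow 5$ are correct and agree with the paper. The step $2\Rightarrow 3$, however, has a real gap.

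The inequality you repeatedly invoke, ``$\al_j\le\al_{l+j-1}$ since $j\le l+j-1$'', is global monotonicity of $(\al_n)$ --- that is property~(1), log-convexity, which you are no longer allowed to assume. Block-convexity only says that the $\al_n$ with index at or below a dyadic threshold $2^\nu$ are dominated by those with index above it; within a single dyadic block the $\al_n$ may sit in arbitrary order (for instance $\al_3=3,\ \al_4=2$ is compatible with block-convexity). So a bare index comparison proves nothing here. Moreover, even granting monotonicity, your bookkeeping is off by one: the pairing $\al_j\leftrightarrow\al_{l+j-1}$ for $1\le j\le k$ yields $m_k m_{l-1}\le m_{k+l-1}$, not $m_k m_l\le m_{k+l-1}$; the discrepancy is a factor $\al_l$, which is unbounded and cannot be swept into ``finitely many boundary cases''.

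The paper's argument genuinely exploits the dyadic structure. After normalizing $m_1=1$ and taking (in the paper's labelling) $1\le l\le k$, one fixes $\nu$ with $2^\nu\le k<2^{\nu+1}$ and compares $\al_2\cdots\al_l$ against $\al_{k+1}\cdots\al_{k+l-1}$. If $l\le 2^\nu$, every left index is $\le 2^\nu$ and every right index is $>k\ge 2^\nu$, so block-convexity at level~$\nu$ gives the bound directly. If $l>2^\nu$, one splits the left product at the threshold $2^\nu$: the low piece $\al_2\cdots\al_{2^\nu}$ is handled at level~$\nu$ as before, while the high piece $\al_{2^\nu+1}\cdots\al_l$ (indices $<2^{\nu+1}$) is matched against $\al_{k+2^\nu}\cdots\al_{k+l-1}$, whose indices exceed $2^{\nu+1}$ because $k>2^\nu$, so block-convexity at level~$\nu+1$ applies. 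In either case one obtains $m_km_l\le m_{k+l-1}$ with $\lm=1$. This two-level dyadic splitting is the missing idea in your sketch.
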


Note that~\ref{9} and~\ref{4} hold with $\lm=1$ by passing to an equivalent weight. This is not true for~\ref{5}. Indeed, we do not know whether \ref{4} and \ref{5} are equivalent or not.
---
The block-convex property seems to be a new concept and offers a more flexible way to construct strictly \fdb weights -- see Example~\ref{fdb-not-log}.

\begin{proof}
\zg1\imp2.
This is obvious.

\zg2\imp9.
Dividing all $m_n$ by $m_1$ we may assume that $m_1=1$. 
Now, for given $1\le l\le k$ we fix $\nu\ge0$ so that $2^\nu\le k<2^{\nu+1}$. If $l\le 2^\nu$, then by hypnosis
\[
  \al_2\cbots\al_{l} \le \al_{k+1}\cbots\al_{k+l-1}.
\]
Otherwise, $k\ge l>2^\nu$, and we argue that
\begin{align*}
  \al_2\cbots\al_{l}
  &=   (\al_2\cbots\al_{2^\nu})(\al_{2^\nu+1}\cbots\al_l) \\
  &\le (\al_{k+1}\cbots\al_{k+2^\nu-1})(\al_{2^\nu+1}\cbots\al_l) \\
  &\le (\al_{k+1}\cbots\al_{k+2^\nu-1})(\al_{k+2^\nu}\cbots\al_{k+l-1}) \\
  &= \al_{k+1}\cbots\al_{k+l-1}.
\end{align*}
This is equivalent to $m_km_l\le m_{k+l-1}$. So we indeed obtain \ref{9} with $\lm=1$.
\goodbreak

\zg9\imp4.
If $m$ is strongly submultiplicative, then
\begin{align*}
  m_rm_{k_1}\cbots m_{k_r}
  &\le \lm m_{k_1+r-1}m_{k_2} \cbots m_{k_r} \\
  &\le \lm^2 m_{k_1+k_2+r-2} \cbots m_{k_r}
  \le \cbots \\
  &\le \lm^r m_{k_1+\bots+k_r}.
\end{align*}  

\zg4\imp5. This again is obvious, as $k\ge r$.
\end{proof}

The properties of being \fdb and \asm are closely related but not equivalent. 
Examples to this effect are given in Appendix~\ref{a-xmp}.
But if $m$ is~\fdb, then the situation is clear cut.

\begin{lem}
Suppose $m$ is \fdb. Then $m$ is \asm if and only if \optx{$m$ is \qtt{analytic}:}
\[
  \al \defeq \inf_{n\ge1 \vph{1^1}} \, m_n\rn > 0.
\]
\end{lem}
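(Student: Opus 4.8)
The plan is to prove both implications directly from the two characterizations already available: the \asm\ property in the ``almost increasing'' form of Lemma~\ref{prop-inv}\,\ref{1}, and the \fdb\ property in its defining form. The point is that ``analytic'' means $m_n\rn$ is bounded below, whereas ``almost increasing'' means $m_n\rn$ is, up to a multiplicative constant, nondecreasing. Under the \fdb\ hypothesis these two become equivalent because \fdb\ supplies an \emph{upper} control on the growth of $m_n\rn$ that, combined with a lower bound, pins the sequence into the almost-increasing regime.

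For the easy direction, suppose $m$ is analytic, i.e. $\al=\inf_n m_n\rn>0$. Recall that by the general assumption $M_n\rn=(n\fac\,m_n)\rn$ is increasing, so by Stirling $m_n\rn = M_n\rn/n\fac\rn \ge (\e/2n)M_n\rn$. I want instead an \emph{upper} bound on $m_l\rn/m_k\rn$ for $k\le l$. Here the \fdb\ property enters: applying it with $r=1$ and a single index — or more usefully, iterating — one extracts that $m_k^n\le \lm^{nk} m_{nk}$ is \emph{not} what \fdb\ gives directly, so instead I would use \fdb\ with $r=1,k_1=k$ repeatedly against the factor $m_r$. The cleanest route: from \fdb, $m_r m_1^r \le \lm^r m_r$ is vacuous, so one should instead compare $m_l$ to $m_k$ via $m_l = $ (using \fdb\ backwards is not available). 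I would therefore argue as follows: since $m$ is log-convex is \emph{not} assumed, but \fdb\ with $r=2$, $k_1=k$, $k_2=l-k$ gives $m_2 m_k m_{l-k}\le \lm^l m_l$, hence $m_k \le (\lm^l/m_2)\, m_l/m_{l-k}$; bounding $m_{l-k}\ge \al^{\,l-k}$ from analyticity and taking $k$-th roots yields $m_k\rn \le C\,\lm^{l/k} m_l^{1/k}\al^{-(l-k)/k}$, which is not yet of the required form. So the genuinely efficient argument is to use \asm's equivalent form as the \emph{target} and \fdb\ to get $m_{k_1}\cbots m_{k_r}\le m_r m_{k_1}\cbots m_{k_r}$ (legal since $m_r>0$, but this inflates the left side — wrong direction). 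The correct observation is that \fdb\ \emph{implies} \asm\ up to the factor $m_r$: we have $m_{k_1}\cbots m_{k_r}\le m_r m_{k_1}\cbots m_{k_r}\le\lm^k m_k$ only if $m_r\ge1$. Thus the real content is controlling $m_r$ from below, and analyticity does exactly that: $m_r\ge\al^r$. If $\al\ge1$ we are done immediately; in general, absorb $\al^{-r}\le(\text{const})^r$ and replace $\lm$ by $\lm/\min(\al,1)$, giving $m_{k_1}\cbots m_{k_r}\le (\lm/\min(\al,1))^k m_k$, which is \asm.

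For the converse, suppose $m$ is \fdb\ \emph{and} \asm. By Lemma~\ref{prop-inv}, \asm\ gives $m_1\rn[1]=m_1\le\lm m_n\rn$ for all $n\ge1$, i.e. $m_n\rn\ge m_1/\lm>0$, which is precisely analyticity with $\al\ge m_1/\lm$. This direction is essentially immediate once Lemma~\ref{prop-inv} is invoked, so the substance of the lemma is the forward implication. The main obstacle is exactly the bookkeeping in that forward direction: \fdb\ carries the spurious factor $m_r$ on the left, and one must verify that analyticity alone (not log-convexity) suffices to absorb it into a clean exponential constant $\lm^k$. I would make sure to state explicitly that one may assume $\al\le1$ without loss (else \fdb\ already is \asm\ with the same $\lm$), and then the whole argument is the one-line estimate $m_{k_1}\cbots m_{k_r}\le \al^{-r}(m_r m_{k_1}\cbots m_{k_r})\le \al^{-r}\lm^k m_k \le (\lm/\al)^k m_k$, using $r\le k$. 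No further estimates are needed.
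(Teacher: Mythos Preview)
Your proposal is correct and, once the many false starts are discarded, reduces to exactly the paper's proof: for the forward direction you use $m_{k_1}\cbots m_{k_r}\le \al^{-r}m_r m_{k_1}\cbots m_{k_r}\le \al^{-r}\lm^k m_k$ with $r\le k$, and for the converse you invoke the almost-increasing form from Lemma~\ref{prop-inv} to get $m_n\rn\ge m_1/\lm$. The exploratory detours (Stirling, \fdb\ with $r=2$, etc.) are unnecessary and should be deleted---the entire argument is the final one-liner you state.
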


\begin{proof}
Suppose $m$ is \fdb. If $\al>0$, then
\[
  m_{k_1}\cbots m_{k_r}
  \le \al^{-r} m_r m_{k_1}\cbots m_{k_r} 
  \le \al^{-r}\lm^k m_k
\]
with $k = k_1+\bots+k_r \ge r$. Hence $m$ is also \asm. Conversely, if $m$ is \asm, then $m$ is also almost increasing by Lemma~\ref{prop-inv}. The latter property includes that $\lm m_n\rn\ge m_1$ for all $n$, whence $\al>0$.
\end{proof}

The condition $\al>0$ amounts to $C^\om\subset E^m$ -- see Lemma~\ref{a-prop}. So whenever $E^m$ contains all the analytic functions, \fdb implies \asm. The converse, however, is not true without further assumptions. To this end,  the following property is usually required, where $\mp$ denotes the \qtt{left shift} of $m$ defined by $\mp_{n}=m_{n+1}$ for $n\ge1$.

\begin{lem}
The following properties are equivalent.
\enum
\item
$\mp \msub m$.
\item 
$m$ is \optx{\qtt{stable under differentiation} or}
\qtt{diff-stable}:\q
\mmx
$\dl \defeq \sup_{n\ge2} \smash{\pas{{m_{n}}/{m_{n-1}}}\rn} < \iny$.
\[
  \dl \defeq \sup_{n\ge2} \pas{\frac{m_{n}}{m_{n-1}}}\rn < \iny.
\]
\item
$\sup_{n\ge1} \mu_n\rn < \iny$.
\endenum
\end{lem}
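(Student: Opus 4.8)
\emph{Proof plan.} The plan is to run the cyclic chain of implications (1)$\Rightarrow$(2)$\Rightarrow$(3)$\Rightarrow$(1), where (1) is the shift condition $\mp\msub m$, (2) is diff-stability, and (3) is $\sup_{n\ge1}\mu_n\rn<\iny$. Everything rests on the elementary dictionary between the two descriptions of the weight. Since $M_n=n\fac\,m_n=\mu_1\cbots\mu_n$ (general assumption), we have $\mu_1=m_1$ and
\[
  \mu_n=\frac{M_n}{M_{n-1}}=\frac{n\fac}{(n-1)\fac}\cdot\frac{m_n}{m_{n-1}}=n\,\frac{m_n}{m_{n-1}},\qq n\ge2,
\]
so that $\mu_n\rn=n\rn\,(m_n/m_{n-1})\rn$. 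Beyond this identity the only input is the trivial bound $1\le n\rn\le2$ for all $n\ge1$, which makes the factor $n\rn$ harmless.

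For (1)$\Rightarrow$(2): if $m_{n+1}\le\lm^n m_n$ for all $n\ge1$ with some $\lm\ge1$, then reindexing gives $m_n\le\lm^{n-1}m_{n-1}\le\lm^n m_{n-1}$ for $n\ge2$, whence $(m_n/m_{n-1})\rn\le\lm$ and $\dl\le\lm<\iny$. For (2)$\Rightarrow$(3): by the identity above, $\mu_n\rn=n\rn\,(m_n/m_{n-1})\rn\le2\dl$ for $n\ge2$, while the single value $\mu_1\rn=m_1$ is finite; hence $\sup_{n\ge1}\mu_n\rn\le\max(m_1,2\dl)<\iny$. For (3)$\Rightarrow$(1): set $c\defeq\max\pas{1,\sup_{n\ge1}\mu_n\rn}$, so $\mu_n\le c^n$ for every $n$, and then for $n\ge1$
\[
  \frac{m_{n+1}}{m_n}=\frac{\mu_{n+1}}{n+1}\le\mu_{n+1}\le c^{n+1}\le(c^2)^n,
\]
which is precisely $\mp\msub m$ with constant $c^2$.

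I do not expect a genuine obstacle: the entire content is the translation between the three sequences $(m_n)$, the consecutive ratios $(m_n/m_{n-1})$, and $(\mu_n)$, together with the bookkeeping that each constant stays finite. The only points needing a little care are that the supremum in (2) starts at $n=2$, so the term $n=1$ must be split off in (2)$\Rightarrow$(3) (it contributes only the fixed finite number $m_1$), and that one should freely enlarge $\lm$, $c$ to be $\ge1$ so that the powers $\lm^n$, $c^n$ behave monotonically in (1)$\Rightarrow$(2) and (3)$\Rightarrow$(1).
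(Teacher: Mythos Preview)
Your proof is correct and follows essentially the same route as the paper: the key identity $m_n/m_{n-1}=\mu_n/n$ (equivalently $\mu_n\rn=n\rn(m_n/m_{n-1})\rn$) is exactly what the paper writes down, after which it simply declares that ``from this the equivalence of all three statements follows.'' You have merely made that last sentence explicit by running the cycle (1)$\Rightarrow$(2)$\Rightarrow$(3)$\Rightarrow$(1) and tracking the constants, which is fine.
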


\begin{proof}
On one hand,
\[
  \mp\msub m \eequi m_{n+1} \le \smash{\lm^n m_{n}}, \q n\ge1.
\]
On the other hand,
\[
  \frac{m_{n}}{m_{n-1}} = \frac{(n-1)\fac}{n\fac}\frac{M_{n}}{M_{n-1}} = \frac{\mu_{n}}{n}.
\]
From this the equivalence of all three statements follows.
\end{proof}

\begin{lem}  \label{prop-diff}
If $m$ is \asm and diff-stable, then $m$ is \fdb.
\end{lem}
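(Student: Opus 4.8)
The plan is to combine the two hypotheses in the obvious multiplicative way and then clean up the extra factors using almost-submultiplicativity. Recall from Lemma~\ref{prop-inv} that, since $m$ is \asm, there is a $\lm_1\ge1$ with $m_{k_1}\cbots m_{k_r}\le\lm_1^k m_k$ for $k=k_1+\bots+k_r$. So the only thing missing to get the \fdb inequality $m_r m_{k_1}\cbots m_{k_r}\le\lm^k m_k$ is to absorb the factor $m_r$ into a geometric term $\lm^k$. Since $k=k_1+\bots+k_r\ge r$ (each $k_i\ge1$), it suffices to show $m_r\le C^r$ for some constant $C\ge1$; then $m_r\le C^r\le C^k$ and we may take $\lm=C\lm_1$.

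To get the bound $m_r\le C^r$, I would again use that \asm implies almost increasing (Lemma~\ref{prop-inv}): there is $\lm_2\ge1$ with $m_1^{1/1}\le\lm_2 m_r^{1/r}$, which only gives a lower bound on $m_r$, so that direction is not enough by itself. Instead I would invoke diff-stability in the form $\dl=\sup_{n\ge2}(m_n/m_{n-1})^{1/n}<\iny$, equivalently $\sup_n\mu_n^{1/n}<\iny$ from the preceding lemma. Writing $m_r=M_r/r\fac=\mu_1\cbots\mu_r/r\fac$ and using $\mu_n\le\dl'^{\,n}$ (with $\dl'$ a suitable constant coming from diff-stability), one gets $M_r\le\dl'^{\,1+2+\dots+r}$, which is of order $\dl'^{\,r^2/2}$ — too big. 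So the naive route through diff-stability alone also fails to give a clean $C^r$; I need to be more careful and use \asm to control the growth of $m_r$ from above.

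The clean argument: by \asm with all $k_i=1$ we have $m_1^{r}\le\lm_1^{r} m_r$, i.e. $m_r\ge (m_1/\lm_1)^r$, again a lower bound. For the upper bound, here is where diff-stability is genuinely used together with \asm. Since $m$ is diff-stable, $\mp\msub m$, so there is $\lm_3\ge1$ with $m_{n+1}\le\lm_3^n m_n$ for all $n\ge1$; iterating, $m_{r}\le\lm_3^{1+2+\dots+(r-1)}m_1$ — still quadratic in the exponent. To defeat this I would instead bound $m_r$ against $m_1\cdots m_1$ via the \emph{reverse} use of \asm is impossible, so the correct move is: diff-stability gives $\mu_n\le n\cdot\dl^n$ for... no. Let me restart this paragraph's logic in the final writeup; the honest statement is that one should bound $m_r^{1/r}$. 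By diff-stability $m_n/m_{n-1}\le\dl^n$, hence $m_r=m_1\prod_{n=2}^r(m_n/m_{n-1})\le m_1\dl^{2+3+\dots+r}$; taking $r$-th roots, $m_r^{1/r}\le (m_1)^{1/r}\dl^{(r+2)/2\cdot(r-1)/r}$, which still diverges.

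Given these failures of the too-naive approaches, the step I expect to be the genuine crux — and the one the authors presumably handle with a short clean trick — is precisely producing the exponential-in-$r$ (rather than exponential-in-$r^2$) bound on $m_r$, i.e. showing $m_r^{1/r}$ is bounded. I would look for this bound by playing \asm against diff-stability simultaneously: \asm applied to the partition of $nr$ into $n$ blocks of size $r$ gives $m_r^{n}\le\lm_1^{nr}m_{nr}$, while diff-stability controls how fast $m_{nr}$ can grow relative to $m_r$ along the subsequence; letting $n\to\iny$ and using that $\mu_n^{1/n}$ is bounded (third formulation of the diff-stable lemma) should pin down $\limsup_r m_r^{1/r}<\iny$, hence $m_r\le C^r$. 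Once that bound is in hand, the proof concludes in one line: $m_r m_{k_1}\cbots m_{k_r}\le C^r\lm_1^k m_k\le C^k\lm_1^k m_k=(C\lm_1)^k m_k$, so $m$ is \fdb with $\lm=C\lm_1$. The main obstacle, to repeat, is the passage from the quadratic bound $M_r\lesssim\dl^{r^2}$ that diff-stability gives directly to the linear-exponent bound $m_r\lesssim C^r$, and it is \asm that must be used to close that gap.
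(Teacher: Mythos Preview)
Your strategy has a genuine gap: the bound $m_r\le C^r$ that you are trying to extract from \asm plus diff-stability is simply \emph{false}. The Gevrey weights $m_n=n\fac^{\,s-1}$ with $s>1$ are log-convex, hence \asm and \fdb, and the paper notes explicitly that they are diff-stable. Yet
\[
  m_r^{1/r}=(r\fac)^{(s-1)/r}\sim (r/\e)^{s-1}\to\iny,
\]
so no exponential bound $m_r\le C^r$ exists. Every variant of your argument that ends with ``hence $\limsup_r m_r^{1/r}<\iny$'' is therefore doomed, and the asymptotic game you sketch with $m_r^{n}\le\lm_1^{nr}m_{nr}$ cannot rescue it.

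The paper's trick is not to bound $m_r$ at all, but to \emph{make room for it inside the \asm product} by spending diff-stability on the other factors. Separate the indices with $k_i=1$ (say $s$ of them) from those with $k_i\ge2$. For each $k_i\ge2$ use diff-stability in the form $m_{k_i}\le\dl^{k_i}m_{k_i-1}$; absorb the $m_1^s$ into the constant. You are left with
\[
  m_r\,m_{k_{s+1}-1}\cbots m_{k_r-1},
\]
a product of $1+(r-s)$ factors whose indices are all $\ge1$ and sum to $r+\sum_{i>s}(k_i-1)=r+(k-s)-(r-s)=k$. Now \asm applies directly and gives $\le\lm^k m_k$. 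The total cost from diff-stability and the $m_1$-factors is at most $(\max(1,m_1)\,\dl)^k$, so $m$ is \fdb with constant $\max(1,m_1)\,\dl\,\lm$. The point you missed is that diff-stability is used to \emph{decrement the $k_i$}, not to bound $m_r$; the decrement frees exactly the amount of index needed to swallow $m_r$ as one more \asm factor.
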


\begin{proof}
Given $m_rm_{k_1}\cbots m_{k_r}$ we arrange the factors so that $k_1=\bots =k_s=1$ and $k_{s+1},\bots,k_r>1$ for some $0\le s\le n$.
Then $k_{s+1}+\bots+k_r=k-s$, and making $\dl\ge m_1$ we get
\begin{align*}
  m_rm_{k_1}\cbots m_{k_r}
  &\le  m_r m_1^s \, \dl m_{k_{s+1}-1}\cbots \dl m_{k_r-1} \\
  &\le \dl^r m_r m_{k_{s+1}-1}\cbots m_{k_r-1} \\
  &\le \dl^r\lm^k m_k.
\end{align*}
Hence, $m$ is also \fdb.
\end{proof}

The assumption of diff-stability, however, represents a severe growth restriction and is certainly not necessary, and it will never be required in the sequel. There is, however, an interesting \optx{and useful} one-to-one correspondence between \fdb and \asm weights, which  has not been noticed before and will be used in the proof of Theorem~\ref{comp-stable}.
 
\begin{lem}  \label{prop-shift}
The weight $m$ is \fdb if and only if its left shift $\mp$ is~\asm.
\end{lem}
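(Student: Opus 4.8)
The plan is to establish the two implications separately; the first is an explicit computation, the second is the delicate one.

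\emph{From \asm of $\mp$ to \fdb of $m$.} After enlarging the \asm-constant of $\mp$ we may assume it is a $\lm\ge1$ with also $\lm\ge m_1$. Given $r$ and $k_1,\dots,k_r\ge1$ with $k=k_1+\dots+k_r$, reorder so that $k_1=\dots=k_s=1$ while $k_{s+1},\dots,k_r\ge2$. For $i>s$ write $m_{k_i}=\mp_{k_i-1}$ with $k_i-1\ge1$; then \asm of $\mp$ applied to $\mp_{k_{s+1}-1}\cdots\mp_{k_r-1}$ gives $m_{k_{s+1}}\cdots m_{k_r}\le\lm^{k-r}m_{k-r+1}$, and multiplying by $m_1^{s}\le\lm^{s}$ yields $m_{k_1}\cdots m_{k_r}\le\lm^{k}m_{k-r+1}$. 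A second application of \asm of $\mp$, this time to $\mp_{r-1}\mp_{k-r}=m_rm_{k-r+1}$, gives $m_rm_{k-r+1}\le\lm^{k-1}m_k$ (here $r\ge2$, and $k-r\ge1$, $r-1\ge1$ because $s<r$ forces $k\ge r+1$). Multiplying the two estimates yields \fdb with constant $\lm^{2}$; the degenerate cases $r=1$ and $s=r$ reduce immediately to $\lm\ge m_1$.

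\emph{From \fdb of $m$ to \asm of $\mp$.} First observe that $\mp$ again satisfies the general assumption, since $n!\,\mp_n=M_{n+1}/(n+1)$ is log-convex: divide the log-convexity $M_{n+1}^{2}\le M_nM_{n+2}$ by $(n+1)^{2}>n(n+2)$. By Lemma~\ref{prop-inv} it is therefore enough to prove that $\mp$ is almost increasing, i.e.\ $m_{k+1}^{1/k}\le\lm\,m_{l+1}^{1/l}$ for $1\le k\le l$. As a stepping stone I would first record that \fdb forces $\al:=\inf_n m_n^{1/n}>0$: decomposing $k$ into $\lfloor k/2\rfloor$ blocks of length $2$ (plus one block of length $1$ when $k$ is odd) and inserting this into \fdb gives, for $\phi(n):=n^{-1}\log m_n$, a recursion of the form $\phi(k)\ge\tfrac12\phi(\lceil k/2\rceil)-\log\lm+O(1/k)$, whose iteration bounds $\phi$ below. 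Hence $m$ itself is \asm, so almost increasing, by the lemma characterizing \asm among \fdb weights.

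\emph{The main obstacle.} What remains is to upgrade ``$m$ almost increasing'' to ``$\mp$ almost increasing''. The naive splitting $m_{k+1}^{1/k}=m_{k+1}^{1/(k+1)}\cdot m_{k+1}^{1/(k(k+1))}$ is too crude, because without diff-stability the correction $m_{k+1}^{1/(k(k+1))}=\exp(\phi(k+1)/k)$ need not stay bounded. One must instead use \fdb once more to compare $m_{k+1}^{l}$ with $m_{l+1}^{k}$ directly: applying \fdb to $l$ copies of the index $k+1$ controls $m_{k+1}^{l}$ by the single factor $m_{l(k+1)}$, and the task is to feed this — together with the weak log-convexity of $m$ and $\al>0$ — back into \fdb so as to reabsorb the index $l(k+1)$ into the $k$-th power of $m_{l+1}$. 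I expect this last comparison to be the technical heart of the argument.
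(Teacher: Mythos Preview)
Your forward direction ($\mp$ \asm $\Rightarrow$ $m$ \fdb) is correct and essentially matches the paper, which compresses it to one line by declaring $\mp_0 \defeq m_1$ and writing $m_r m_{k_1}\cdots m_{k_r} = \mp_{r-1}\mp_{k_1-1}\cdots\mp_{k_r-1}$ directly; your separation of the indices $k_i=1$ is just an explicit unpacking of that convention.

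For the reverse direction your proposal is incomplete --- and you say so yourself. The route via ``$\al>0$, hence $m$ almost increasing, then upgrade to $\mp$ almost increasing'' runs straight into the obstacle you identify: without diff-stability there is no cheap passage from $m_k^{1/k}$ to $m_{k+1}^{1/k}$, and the sketch in your final paragraph does not close that gap. The paper bypasses this detour entirely and never touches $\al$ or ``almost increasing''. It proves the \asm inequality for $\mp$ by a direct combinatorial trick. Writing $l_i=k_i+1$, so that $\mp_{k_1}\cdots\mp_{k_r}=m_{l_1}\cdots m_{l_r}$, and normalizing so that $m_1\ge1$, one observes that if the largest index satisfies $l_1\ge r-1$ then the product can be \emph{padded} with $l_1-r+1$ extra factors $m_1\ge1$. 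The \fdb inequality then applies with $m_{l_1}$ in the role of the leading factor: there are now exactly $l_1$ trailing factors, whose indices sum to $l_2+\cdots+l_r+(l_1-r+1)=l-r+1=k+1$, yielding
\[
  m_{l_1}m_{l_2}\cdots m_{l_r}
  \le m_{l_1}m_{l_2}\cdots m_{l_r}\,m_1^{l_1-r+1}
  \le \lm^{k+1}m_{k+1}=\lm^{k+1}\mp_k.
\]
When all $l_i<r-1$, a short inductive reduction --- repeatedly applying \fdb to merge groups of factors until some index has grown to at least $r-1$ --- brings one back to the first case. This padding-and-merging device is the missing idea; once seen, the argument is a few lines and requires neither $\al>0$ nor the almost-increasing characterization.
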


\begin{proof}
First suppose $\mp$ is \asm. With $r\ge2$ and $\mp_0\defeq m_1$ we get
\[
  m_rm_{k_1}\cbots m_{k_r}
  = \mp_{r-1}\mp_{k_1-1}\cbots \mp_{k_r-1}
  \le \lm^{k-1} \mp_{k-1}
\]
with $k= \optx{r+k_1-1+\bots+k_r-1=} k_1+\bots+k_r$ as usual. As $\mp_{k-1}=m_k$, the weight $m$ is~\fdb.

Conversely, suppose $m$ is~\fdb and $m_1\ge1$ for simplicity.
Let $l_i=k_i+1$ for $1\le i\le r$. If $l_1 \ge r-1$, say, we write 
\[
  \mp_{k_1}\cbots\mp_{k_r}
  =   m_{l_1} m_{l_2}\cbots m_{l_{r}}
  \le m_{l_1} m_{l_2}\cbots m_{l_{r}}m_1^{\smash{l_1}-r+1}.
\]
We then apply the \fdb property to the last term with $m_{l_1}$ as the \qtt{leading factor} and $l_1$ trailing factors to get
\[
  \mp_{k_1}\cbots\mp_{k_r}
  \le \lm^{l-r+1} m_{l-r+1},
  \qq
  l=l_1+\bots+l_r.
\]
As $l-r=k$ and $m_{k+1}=\mp_k$ we get the \asm property for~$\mp$ in this case.

Otherwise, we have $l_1,\bots,l_r<r-1$. We then proceed by induction to get
\[
  m_{l_1} m_{l_2}\cbots m_{l_{r}}
  \le \lm^{n_s} m_{n_s-n_{s-1}} m_{l_a+n_{s-1}+1}\cbots m_{l_{r}}
\]
with $a=l_1$ and
\mms $
  n_s = l_2+\bots+l_{a+n_{s-1}}
$
for $s\ge1$ with $n_0=0$.
After finitely many steps, \optx{we get} $n_s-n_{s-1} \ge r-a-1-n_{s-1}$, or
\mms $
  n_s \ge r-1-l_1
$.
Now we apply the immediate estimate to obtain
\[
  m_{l_1}m_{l_2}\cbots m_{l_{r}}
  \le \lm^{n_s+n}m_n
\]
with
\begin{align*}
  n 
  &= l_{a+n_{s-1}+1}+\bots+l_{r} + l_1 + n_s - r +1  \\
  &= l_1+\bots+l_{r}-r+1 
  \\&
   = k+1
\end{align*}
as before. So also in this case we obtain the \asm property for~$\mp$.
\end{proof}

We note that for Gevrey weights we have
\[
  M_n = n\fac^{s},
  \qq
  m_n = n\fac^{s-1}.
\] 
Hence, for $s>0$ they are weakly log-convex. For $s\ge1$ they are log-convex and thus \asm and stricly \fdb. They are also diff-stable, as 
\[
  \lim_{n\to\iny} \pas{\frac{m_{n}}{m_{n-1}}}\rn
  = \lim_{n\to\iny} n^{s/n}
  = 1.
\]

\section{Composition}  \label{comps}

To study the composition of \m{E}-maps we first consider formal power series, which avoids the cumbersome Faà di Bruno formula.
We employ the standard notation 
\[
  \sum_{k\in\Lm} f_kx^k \cle \sum_{k\in\Lm} g_kx^k
\]
for two formal power series in \m{\vq}-space, when 
\[
  \ab{f_k} \vle g_k
  \eeequi
  \n*{f_{k,i}}\le g_{k,i}, \qq 1\le i\le\vq,
\]
holds for all coefficients.

To simplify notation we consider a \m{s}-dimensional weight as a weight on any lower-dimensional index space as well by identifying $(k_1,\bots,k_{\vq})$ with $(k_1,\bots,k_{\vq},0,\bots,0)$.
In other words, we add dummy coordinates to make the dimensions equal.

\begin{ml} \label{comp-wm}
Let $g$ and $h$ be two formal power series without constant terms. If\/ $m$ and $w$ are two weights such that, with some $\lm>0$,
\[
  w_lm_{k_1}\cbots m_{k_r} \le \lm^k m_k 
\]
for all $l\ne0$ and $k_1,\bots,k_r\ne0$ such that
$r=\n{l}$ and $k_1+\bots+k_r = k$,
then
\[
  \Md^m_0(g\cmp h) \cle \Md^w_0\!g\cmp\Md^m_0 h \cmp \lm.
\]
\end{ml}

\begin{proof}
Write
\[
  g = \sum_{l\ne0} g_lz^l,
  \qq
  h = \sum_{k\ne0} h_kx^k,
\]
where $l=(l_1,\bots,l_{\vq})$ and $k=(k_1,\bots,k_s)$. Then
\[
  g\cmp h
  = \sum_{l\ne0} g_l {\pas3{ \,\sum_{k\ne0} h_kx^{k}}^{\lox{l}}}
  = \sum_{k\ne0} \pas3{ \,\sum_{l\ne0}\,\sum_{k_1,\bots,k_r} g_l\hd_{k_1}\cbots \hd_{k_r}\! }x^k,
\]
where the last sum is taken over all $l\ne0$ and $k_1,\bots,k_r\ne0$ such that $r=\n{l}$ and $k_1+\bots+k_r = k$,
and where $\hd_k$ stands for a certain component of the \m{\vq}-vector $h_k$ which we do not need to make explicit. 

By hypotheses,
$
  m_k \ge \lm^{-k}w_lm_{k_1}\cbots m_{k_r}
$
in all cases.
Therefore,
\begin{align*}
  \Md^m_0(g\cmp h)
  &\cle \sum_{k\ne0} {\pas3{ \,\sum_{l\ne0}\,\sum_{k_1,\bots,k_r}
         \ab{g_l}\n*{\hd_{k_1}\!}\cbots\n*{\hd_{k_r}\!} }} \frac{x^k}{m_k}
         \\
  &\cle \sum_{k\ne0} {\pas3{ \,\sum_{l\ne0}\,\sum_{k_1,\bots,k_r}
         \frac{\ab{g_l}}{w_l}
         \frac{\n*{\hd_{k_1}\!}}{m_{k_1}}\cbots\frac{\n*{\hd_{k_r}\!}}{m_{k_r}} }} (\lm x)^k.
\end{align*}
This is tantamount to
\begin{align*}
  \Md^m_0(g\cmp h)
  \cle \sum_{l\ne0} 
         \frac{\ab{g_l}}{w_l} {\pas3{ \,\sum_{k\ne0} \frac{\ab{h_k}}{m_k}(\lm x)^k }^{\lox{l}}}
  = \Md^w_0g\cmp \Md^m_0h \cmp \lm.
  \qedhere
\end{align*}
\end{proof}

\begin{thm} \label{comp-gen}
Let $g\in E^w$ and $h\in E^m$ and suppose $g\cmp h$ is well defined. If\/ $w$ and $m$ satisfy the assumption of the preceding lemma, then $g\cmp h\in E^m$. In particular,
\[
  \nn{g\cmp h}^m_{a,r} 
  \le \nn{g}^w_{h(a),\rh},
  \qq
  \rh = \nn{h}^m_{a,\lm r}.
\]
\end{thm}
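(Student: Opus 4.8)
The plan is to deduce Theorem~\ref{comp-gen} directly from the Main Lemma~\ref{comp-wm} together with the characterization of membership in $E^m$ via analyticity of the weighted Taylor series on a polydisc of locally constant radius. First I would fix a point $a$ in the domain of $h$, set $b=h(a)$, and recall that since $g\in E^w$ there is a neighborhood $V$ of $b$ and a radius $\rh_0>0$ with $\nn{g}^w_{V,\rh_0}<\iny$, and since $h\in E^m$ there is a neighborhood $U$ of $a$ (which, shrinking if necessary, we may assume is mapped by $h$ into $V$) and a radius $r_0>0$ with $\nn{h}^m_{U,r_0}<\iny$. The one subtlety is that the Main Lemma is stated for series \emph{without constant term} and with expansion point the origin, whereas here we expand $g$ at $b$ and $h$ at $a$; so I would reduce to that case by translating, i.e. replacing $g$ by $z\mapsto g(b+z)-g(b)$ and $h$ by $x\mapsto h(a+x)-h(a)=h(a+x)-b$, which have no constant terms, are expanded at the origin, and whose composition (up to the additive constant $g(b)$, which is killed by $\Md$) is the translate of $g\cmp h$ expanded at $a$. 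Their weighted Taylor series at the origin are exactly $\Md^w_bg$ and $\Md^m_ah$.

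Next I would apply the Main Lemma to these translated series: the hypothesis on $w$ and $m$ is precisely the displayed inequality assumed in the theorem, so we get the coefficientwise domination
\[
  \Md^m_a(g\cmp h) \cle \Md^w_bg\cmp\Md^m_ah\cmp\lm .
\]
Now I evaluate both sides at the diagonal point $(\rbr)$ with an $r>0$ to be chosen. The right-hand side, by definition of $\nn\cd$, becomes $\Md^w_bg$ evaluated at the point whose $i$-th coordinate is the $i$-th component of $\Md^m_ah(\lm r,\bots,\lm r)$; since $\ab{h_k}\vle$ that coordinate sum and the sup-norm of $\Md^m_ah(\lm r,\bots,\lm r)$ is exactly $\nn{h}^m_{a,\lm r}$, and since $\Md^w_bg$ has nonnegative coefficients hence is monotone in each argument, the right-hand side is bounded by $\Md^w_bg(\rho,\bots,\rho)$ with $\rho=\nn{h}^m_{a,\lm r}$, i.e. by $\nn{g}^w_{b,\rho}$. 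Taking the sup-norm of the left-hand side gives $\nn{g\cmp h}^m_{a,r}$, so
\[
  \nn{g\cmp h}^m_{a,r} \le \nn{g}^w_{h(a),\rho},
  \qq \rho=\nn{h}^m_{a,\lm r},
\]
which is the asserted inequality.

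Finally I would check that this actually yields $g\cmp h\in E^m$, i.e. that $r$ can be chosen so that the right-hand side is finite and $a$ may be allowed to vary over a neighborhood. Since $\nn{h}^m_{a,\lm r}\to 0$ as $r\to0$ (the series $\nn{h}^m_{a,\lm r}=\Md^m_ah(\lm r,\bots,\lm r)$ has no constant term and converges for $\lm r\le r_0$, uniformly in $a\in U$ by $\nn{h}^m_{U,r_0}<\iny$), we may pick $r>0$ small enough that $\nn{h}^m_{U,\lm r}<\rh_0$ and $h(U)\subset V$; then $\nn{g\cmp h}^m_{U,r}\le\nn{g}^w_{V,\rh_0}<\iny$. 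The main obstacle — really the only point requiring care — is this last bookkeeping about \emph{local} constancy of the radii and the translation to the origin: one must make sure the neighborhoods and radii are chosen in the right order so that all the sup's are finite simultaneously and $h$ genuinely maps $U$ into the domain where $g$'s expansion is controlled. Everything else is a direct unwinding of definitions on top of the Main Lemma.
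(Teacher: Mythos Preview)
Your proposal is correct and follows essentially the same route as the paper: apply the Main Lemma to the (translated) Taylor series, evaluate the resulting coefficientwise inequality at the diagonal point $(r,\bots,r)$, use nonnegativity of the coefficients of $\Md^w_bg$ to pass to $\rho=\nn{h}^m_{a,\lm r}$, and finally shrink $r$ so that $\nn{h}^m_{U,\lm r}<\rho_0$ uniformly on a neighborhood. The paper's proof is the same argument, only slightly less explicit about the translation step and the monotonicity.
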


\begin{proof}  \def\so{\rh_0}
Let $h$ be of class $E^m$ near $a$ and $g$ be of class $E^w$ near $b=h(a)$. 
Then $g\cmp h$ is well defined near~$a$.
Moreover, there is a neighborhood $V$ of $b$ and a $\so$ such that
$
  \nn{g}^w_{V\!,\so} < \iny
$.
There is another neighborhood $U$ of $a$ and an~$r$ such that $h(U)\subset V$ and
$
  \nn{h}^m_{U,r} < \iny
$.
As this semi-norm does not include a constant term, we can make it as small as we like by making $r$ small. So in particular we can choose $r$ so that
\[
  \nn{h}^m_{U,\lm r} < \so.
\]
Applying the preceding lemma to the formal Taylor series expansions of $g$ at $b$ and $h$ at~$a$ we obtain
\[
  \Md^m_a(g\cmp h) \cle \Md^w_b\!g \cmp \Md^m_ah \cmp \lm.
\]
Considering each component of $g\cmp h$ separately we conclude that
\begin{align*}
  \nn{g\cmp h}^m_{a,r} 
  &=   \Md^m_a(g\cmp h)(\rbr) \\
  &\le \Md^w_b\!g \cmp \Md^m_ah(\lm r,\bots,\lm r) \\
  &\le \Md^w_b\!g(\nn*{h_1}^m_{a,\lm r},\bots,\nn*{h_{\vq}}^m_{a,\lm r}). 
\end{align*}
With $\max_{1\le i\le\vq} \nn{h_i}^m_{a,\lm r} = \nn{h}^m_{a,\lm r} \eqdef \rh < \so$ we get
\[
  \nn{g\cmp h}^m_{a,r} 
  \le \Md^w_b\!g(\rh,\bots,\rh)
  = \nn{g}^w_{b,\rh}
  < \iny.
\]
As this holds locally around any point in the domain of~$h$ and hence of $g\cmp h$, the latter is also of class~$E^m$.
\end{proof}

\section{Characterizations of $E^m$}  \label{s-char}

We now characterize those \m{E^m}-spaces, which are stable under composition. There are two distinct cases, the holomorphically or \m{C^\om}-closed and the \m{E^m}-closed spaces. 
Neither of these characterizations require the property of being closed under derivation. 
These results are obviously optimal and more general than those in \cite{RaS-14,RaS-15}. 

The first theorem already appeared in \cite{Sid-90} but apparently did not receive much attention. Its roots go back to \cite{Rud-62,BoH-62}.

\begin{thm} \label{hol-stable}
The following statements are equivalent.
\enum[z]
\item•1
$m$ is \asm.
\item•2
$E^m$ is \qtt{holomorphically closed}:\en for $g\in C^\om$ and $h\in E^m$, also $g\cmp h\in E^m$.
\item•3
$E^m$ is \qtt{inverse closed}:\en for $h\in E^m$, also $1/h\in E^m$ wherever defined.
\endenum
\end{thm}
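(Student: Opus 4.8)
The plan is to prove the cycle of implications $\ref{1}\imp\ref{2}\imp\ref{3}\imp\ref{1}$, using the composition machinery already built up.

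For $\ref{1}\imp\ref{2}$: Suppose $m$ is \asm. I want to show that composing an analytic $g$ with $h\in E^m$ stays in $E^m$. By Theorem~\ref{comp-gen}, it suffices to exhibit a weight $w$ such that $g\in E^w$ and the pair $(w,m)$ satisfies the hypothesis of the Main Lemma~\ref{comp-wm}, namely $w_lm_{k_1}\cbots m_{k_r}\le\lm^km_k$ whenever $r=\n{l}$ and $k_1+\bots+k_r=k$. Since $g$ is analytic, it lies in $E^{(1)}$ locally, so the natural candidate is the constant weight $w=(1)$, or more precisely any weight $w$ with $w_l\le C^{\n l}$. With $w_l$ bounded by $C^r$ (where $r=\n l$), the required inequality becomes $m_{k_1}\cbots m_{k_r}\le (\lm/C)^k m_k$, which is exactly the \asm property (after absorbing constants, using $k\ge r$). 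So the Main Lemma applies and $g\cmp h\in E^m$. One must be a little careful that the single constant $\lm$ in the Main Lemma can be chosen uniformly; this is where the bound $w_l\le C^{\n l}$ together with \asm gives $w_lm_{k_1}\cbots m_{k_r}\le C^rm_{k_1}\cbots m_{k_r}\le C^k\lm^k m_k$, so $(C\lm)^k$ works.

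For $\ref{2}\imp\ref{3}$: This is the standard trick. If $h\in E^m$ and $h$ is nonvanishing on some neighborhood, then locally $h$ takes values in a region where $z\mapsto 1/z$ is analytic; composing gives $1/h\in E^m$ by \ref{2}. The only subtlety is the bookkeeping that the "composition is well defined" hypothesis of Theorem~\ref{comp-gen} is met, which is immediate since $1/z$ is holomorphic away from the origin and $h(a)\ne0$.

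For $\ref{3}\imp\ref{1}$: This is the step I expect to be the main obstacle, since it must manufacture the combinatorial \asm inequality out of a single analytic fact (inverse-closedness). The idea is to test inverse-closedness on a well-chosen function. Use the characteristic function $\eta\in E^m$ from Lemma~\ref{char-func}, with $\eta(o)=0$ and Taylor coefficients $\eta_n(o)=\j^{n-1}s_n$, $s_n\ge m_n$. Then $1-\eta$ (suitably localized so it does not vanish) has reciprocal $f=1/(1-\eta)=\sum_{n\ge0}\eta^n$, whose Taylor coefficients are sums of products $s_{k_1}\cbots s_{k_r}$ over compositions of $n$. Since $f\in E^m$ by hypothesis~\ref{3}, its coefficients satisfy $|f_n|\le\lm^nm_n$ for some $\lm$; picking off a single term of the sum (the signs can be arranged, or one passes to absolute values by comparing with a majorant series built from $|s_n|=s_n$) gives $s_{k_1}\cbots s_{k_r}\le\lm^nm_n$ for every composition $k_1+\bots+k_r=n$, and since $s_{k_i}\ge m_{k_i}$ this yields $m_{k_1}\cbots m_{k_r}\le\lm^nm_n$, i.e. the \asm property via Lemma~\ref{prop-inv}. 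The delicate point is ensuring no cancellation kills the term I want to extract; the clean way is to note $1/(1-\eta)$ and $1/(1-|\eta|)$ — where $|\eta|$ denotes the series with coefficients $s_n x^n$ — have the same domination behavior, or simply to replace $\eta$ by the real characteristic function with nonnegative coefficients constructed analogously, so that all coefficients of $\sum\eta^n$ are genuine nonnegative sums and each individual product $s_{k_1}\cbots s_{k_r}$ is bounded by the full coefficient $f_n$. That reduces everything to the elementary estimate above.
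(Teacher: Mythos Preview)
Your proposal is correct and follows essentially the same route as the paper: \ref{1}$\imp$\ref{2} via Theorem~\ref{comp-gen} with the constant weight $w=(1)$, \ref{2}$\imp$\ref{3} via $z\mapsto 1/z$, and \ref{3}$\imp$\ref{1} by testing on $1/(1-\eta)$ for the characteristic function $\eta$ of Lemma~\ref{char-func}. You are right to flag the sign issue in the last step --- with $\eta_n=\j^{n-1}s_n$ the phases of the terms in the $n$-th Taylor coefficient of $(1-\eta)^{-1}$ are $\j^{n-r}$, which depends on~$r$, so the terms of the inner sum are not automatically aligned; the paper glosses over this, while your suggestion to use a characteristic function with nonnegative real coefficients (or otherwise avoid cancellation) is exactly the clean fix.
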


\begin{proof}
\zg1\imp2.
Recall that $C^\om=E^w$ with $w=(1)_{n\ge1}$. If $m$ is \asm, then $w$ and $m$ satisfy the hypothesis of the Main Lemma~\ref{comp-wm}. Thus, if $g\in C^\om$ and $h\in E^m$, then also $g\cmp h\in E^m$ by Lemma~\ref{comp-gen}.

\zg2\imp3.
This is obvious, as $z\mapsto 1/z$ is holomorphic for $z\ne0$.

\zg3\imp1.
Assume for simplicity that $E^m$ consists of functions on an interval around~$0$. Let $\eta$ be the characteristic function of Lemma~\ref{char-func} and $\rh\maps z\mapsto (1-z)\inv$. The Taylor coefficients of $\rh$ are $\rh_r=1$ for all $r\ge1$, so
\begin{align*}
  \Td_0(\rh\cmp\eta)
  &= \sum_{n>0} \pas3{ \,\sum_{r>0}\,\sum_{n_1+\bots+n_r=n\vph0} 
       \rh_r \eta_{n_1}\cbots \eta_{n_r} } x^n \\
  &= \sum_{n>0} \j^{n-r} \pas3{ \,\sum_{r>0}\,\sum_{n_1+\bots+n_r=n\vph0} s_{n_1}\cbots s_{n_r} } x^n
\end{align*}
and
\[
  \nn{\rh\cmp\eta}^m_{0,r}
  \ge \sum_{n>0}\,\sum_{r>0}\,\sum_{n_1+\bots+n_r=n\vph0} \frac{m_{n_1}\cbots m_{n_r}}{m_n} \, r^n.
\]
Hence, for $\rh\cmp\eta$ to be in $E^m$, the weight $m$ has to be \asm.
\end{proof}


\begin{thm} \label{comp-stable}
The following statements are equivalent.
\enum[z]
\item•1
$m$ is \fdb.
\item•2
$E^m$ is \qtt{composition closed}:\en with $g,h\in E^m$, also $g\cmp h\in E^m$.
\item•3
$E^m$ is \qtt{inversion closed}:\en
if\/ $g$ is a local diffeomorphism in $E^m$, then its local inverse $g\inv$ is also in~$E^m$.
\endenum
\end{thm}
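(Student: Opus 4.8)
The plan is to prove the cycle $\ref{1}\imp\ref{2}\imp\ref{3}\imp\ref{1}$, using the Main Lemma~\ref{comp-wm} and the shift correspondence of Lemma~\ref{prop-shift} as the two principal tools. For $\ref{1}\imp\ref{2}$, observe that the \fdb property of $m$ is \emph{exactly} the hypothesis of the Main Lemma~\ref{comp-wm} with $w=m$: indeed, if $r=\n{l}$ and $k_1+\bots+k_r=k$, then $m_lm_{k_1}\cbots m_{k_r}\le m_{\n{l}}m_{k_1}\cbots m_{k_r}\le \lm^k m_k$, where one first replaces $m_l$ by $m_{\n{l}}$ using that $m$ is weakly increasing in the multi-index sense (this is where the general assumption and the convention on dummy coordinates enter; one should check that the one-dimensional \fdb property transports to multi-indices via $\n{\cdot}$). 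Then Theorem~\ref{comp-gen} gives $g\cmp h\in E^m$ whenever $g,h\in E^m$ and the composition is defined. The implication $\ref{2}\imp\ref{3}$ is the classical argument: given a local diffeomorphism $g\in E^m$ with $g(0)=0$, write $g\inv$ via the implicit/fixed-point scheme or directly via the Lagrange inversion formula, and recognize that the Taylor coefficients of $g\inv$ are built from those of $g$ by substitutions governed again by the \fdb combinatorics; alternatively, one can feed $g\inv$ through $\ref{2}$ by noting that $g\cmp g\inv=\id$ pins down $g\inv$ and a majorant/bootstrap argument keeps it in $E^m$. The cleanest route is probably to set up the inverse as the solution of $y=x-\gh(y)$ with $\gh$ the nonlinear part of $g$, iterate, and use that $E^m$ is an algebra (general assumption) plus composition-closedness to control each iterate, then pass to the limit using the quantitative bound in Theorem~\ref{comp-gen}.

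The implication $\ref{3}\imp\ref{1}$ is where I expect the real work, and it is the natural analogue of the $\ref{3}\imp\ref{1}$ step in Theorem~\ref{hol-stable}. Here is where Lemma~\ref{prop-shift} should be decisive: $m$ is \fdb if and only if the left shift $\mp$ is \asm. So it suffices to produce, from inversion-closedness of $E^m$, a function witnessing that $\mp$ must be \asm — i.e., mimic the $\rh\cmp\eta$ construction of Theorem~\ref{hol-stable} but one derivative up. Concretely, take the characteristic function $\eta\in E^m$ of Lemma~\ref{char-func} with $\eta(0)=0$ and $\eta_n(0)=\j^{n-1}s_n$, $s_n\ge m_n$; then $g=\id+\eta$ is a local diffeomorphism in $E^m$ (its derivative at $0$ is $1+\eta_1(0)$, and after a harmless rescaling one arranges $g'(0)\ne0$ and $g$ injective near $0$). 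Inversion-closedness forces $g\inv\in E^m$. Now Lagrange inversion expresses the $n$-th Taylor coefficient of $g\inv$ as $\frac1n[z^{n-1}]\bigl(z/g(z)\bigr)^{n}$ up to sign, which unfolds into a sum of products $\eta_{k_1}\cbots\eta_{k_r}$ with $k_1+\bots+k_r=n-1$ and $r$ ranging up to $n-1$ — precisely a sum over the combinatorics that, read through $\eta_k\ge m_k$ and $\mp_{k-1}=m_k$, says $m_{k_1}\cbots m_{k_r}\le \mathrm{const}^{n}\,m_n$ with $k_i-1$ summing to $n-1-r$; matching indices this is the \asm inequality for $\mp$. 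Finiteness of $\nn{g\inv}^m_{0,\rho}$ for some $\rho>0$ then yields a uniform constant $\lm$, hence $\mp$ is \asm, hence $m$ is \fdb by Lemma~\ref{prop-shift}.

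The main obstacle, then, is the bookkeeping in $\ref{3}\imp\ref{1}$: one must choose the right test diffeomorphism and extract from the single scalar bound $\nn{g\inv}^m_{0,\rho}<\iny$ a constant $\lm$ that is \emph{uniform in $r$ and in all the $k_i$}, and one must make sure no cancellation in the Lagrange inversion formula hides terms — this is handled by noting, as in Theorem~\ref{hol-stable}, that the coefficients of $\eta$ have a fixed phase pattern ($\j^{n-1}s_n$ with $s_n>0$), so after factoring out the power of $\j$ all summands are positive and no cancellation occurs, giving a genuine lower bound $\nn{g\inv}^m_{0,\rho}\ge \sum \bigl(\prod m_{k_i}/m_n\bigr)\rho^n$ that must be finite. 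A secondary technical point is the passage between one-dimensional \fdb and the multi-index hypothesis of the Main Lemma in $\ref{1}\imp\ref{2}$: one should verify that the dummy-coordinate convention makes $m_l$ depend only on $\n{l}$ and that the weakly-log-convex general assumption gives $m_{\n{l}}m_{k_1}\cbots m_{k_r}\le\lm^k m_k$ from the scalar \fdb inequality with $r=\n{l}$, which is immediate once stated.
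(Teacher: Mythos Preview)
Your $\ref{1}\imp\ref{2}$ is right and matches the paper. The genuine gap is $\ref{2}\imp\ref{3}$. Every route you sketch --- Lagrange inversion, the fixed-point iteration $y_{n+1}=x-\gh(y_n)$, or bootstrapping through $g\cmp g\inv=\id$ --- ultimately needs a \emph{quantitative} composition bound to pass to a limit or to control the coefficients of the inverse uniformly. The only such bound on offer is Theorem~\ref{comp-gen}, whose hypothesis is precisely the \fdb inequality; composition-closedness by itself is a purely qualitative statement and gives you no handle on $\nn{g\cmp h}^m_{a,r}$ in terms of the norms of $g$ and~$h$. So your iterates do lie in $E^m$, but you cannot carry a uniform bound to the limit, and invoking Theorem~\ref{comp-gen} at that point is circular.

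The paper avoids this by never proving $\ref{2}\imp\ref{3}$; instead it establishes the four implications $\ref{1}\imp\ref{2}$, $\ref{1}\imp\ref{3}$, $\ref{2}\imp\ref{1}$, $\ref{3}\imp\ref{1}$. For $\ref{1}\imp\ref{3}$ the shift lemma is used in the direction opposite to yours: since $m$ is \fdb, the shift $\mp$ is \asm, hence $E^{\mp}$ is holomorphically closed by Theorem~\ref{hol-stable} and in particular a normed algebra. Differentiating $\rhh=\gh\cmp\rh$ once lands the problem in $E^{\mp}$, and a contraction estimate run on finite Taylor polynomials (so that finiteness of the limit is never presupposed) yields $D\rhh\in E^{\mp}$, hence $\rh\in E^m$. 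For $\ref{2}\imp\ref{1}$ the paper composes the characteristic function with itself: in $\eta\cmp\eta$ the phases at order~$n$ combine to $\j^{r-1}\j^{n_1-1}\cbots\j^{n_r-1}=\j^{n-1}$, so after factoring this out every summand is positive and finiteness of $\nn{\eta\cmp\eta}^m_{0,r}$ forces the \fdb inequality directly.

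Your $\ref{3}\imp\ref{1}$ via Lagrange inversion plus the shift lemma is a different route from the paper's, but your ``no cancellation'' claim fails as stated: expanding $(z/g(z))^n=(1+\eta(z)/z)^{-n}$ by the negative binomial series introduces alternating signs $(-1)^r$ in the summation over~$r$, and the $\j^{n-1}$ phase coming from the coefficients of $\eta$ does not absorb them, so terms with different $r$ interfere. The paper's $\ref{3}\imp\ref{1}$ instead sets $\eta=\id+\eth$, $\rh=\id-\rhh$, reads $\rhh=\eth\cmp\rh$ as a recursion for the coefficients, and extracts $r_n\ge m_rm_{n_1}\cbots m_{n_r}$ directly --- obtaining \fdb without the detour through~$\mp$.
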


The first results of this kind are apparently due to Gevrey~\cite{Gev-18} and Car\-tan~\cite{Car-40}.
For log-convex weights this was shown by Roumieu~\cite{Rou-62}, and for more details see \cite{FrG-06,Ide-89,RaS-14,RaS-15} among many others.
The necessity of the \fdb condition generalizes results in \cite{RaS-15} and is new, as is the proof of \ref{1}$\imp$\ref{3}.

Note also that when $E^m$ is inversion closed, the corresponding implicit function theorem holds as well.

\begin{proof}
\zg1\imp2.
If\/ $m$ is \fdb, then we can apply Lemma~\ref{comp-gen} with $w=m$ and conclude that $E^m$ is stable under composition.

\zg1\imp3.   
We may reduce the problem to a local diffeomorphism $g=\id+\gh$ in the neighbourhood of its fixed point~$0$, where the hat denotes higher order terms. Its local inverse may be written as $\rh=\id-\rhh$. From $g\cmp \rh=\id$ we obtain
$\rhh = \gh\cmp\rh$ 
and hence
\mms $
  D\rhh = (D\gh\cmp\rh)(I+D\rhh)
$.
With $g\in E^m$ \opt{we have} $D\gh\in E^{\mp}$. 
As $\mp$ is \asm by Lemma~\ref{prop-shift}, $E^{\mp}$ is holo\-morphically closed, so in particular an algebra, and we may normalize $\mp$ so that $\nn{uv}^{\mp}_{a,r}\le\nn{u}^{\mp}_{a,r}\nn{v}^{\mp}_{a,r}$. Furthermore, there exists a neighbourhood $U$ of $0$ and an $r>0$ such that
\[
  \nn{D\gh}^{\mp}_{U,r} \le 1/2,
\]
and another neighboorhoud $V$ so that $\rh(V)\subset U$.

As we do not know yet whether $\nn*{D\rhh}^{\mp}_{V,r}$ is finite, we first consider its \m{N}-th Taylor polynomial $T^N\!D\rhh$. As in the above equation its terms do not depend on higher oder terms we conclude that
\[
  \nn*{T^N\!D\rhh}^{\mp}_{V,r}
  \le \nn{D\gh}^{\mp}_{U,r}(1+\nn*{T^N\!D\rhh}^{\mp}_{V,r}).
\]
Hence,
\[
  \nn*{T^N\!D\rhh}^{\mp}_{V,r} \le 2 \nn{D\gh}^{\mp}_{U,r}.
\]
As this hold for all $N$, this implies that $D\rhh\in E^{\mp}$ and consequently that $\rh\in E^m$.

\zg2\imp1.
Assume again that $0$ is in the domain under consideration. For the characteristic \m{E^m}-function $\eta$ of Lemma~\ref{char-func} we obtain
\begin{align*}
  \Td_0(\eta\cmp\eta)
  &= \sum_{n>0} \pas3{ \,\sum_{r>0}\,\sum_{n_1+\bots+n_r=n\vph0} \eta_r\eta_{n_1}\cbots\eta_{n_r} } x^n \\
  &= \sum_{n>0} \j^{n-1} \pas3{ \,\sum_{r>0}\,\sum_{n_1+\bots+n_r=n\vph0} s_rs_{n_1}\cbots s_{n_r} } x^n,
\end{align*}
as the ‘signs’ of all terms for $x^n$ combine to 
\mms
$\j^{r-1}\j^{n_1-1}\cbots\j^{n_r-1} = \j^{r-1}\j^{n-r} = \j^{n-1}$.
It follows that
\[
  \nn{\eta\cmp\eta}^m_{0,r} 
  \ge \sum_{n>0} \,\sum_{r>0}\,\sum_{n_1+\bots+n_r=n\vph0} \frac{m_rm_{n_1}\cbots m_{n_r}}{m_n} \, r^n.
\]
Hence, for $\eta\cmp\eta$ to be in $E^m$, the weight $m$ has to be \fdb.

\zg3\imp1.  \let\rhr=r
We may modifiy the linear term of the characteristic function $\eta$ so that $\eta=\id+\eth$. This is a local diffeomorphism at~$0$ with an inverse function of the form $\rh=\id-\rhh$ for which we make the ansatz
\[
  \rhh = \sum_{n\ge2} \rh_nx^n = \sum_{n\ge2} \j^{n-1}\rhr_nx^n.
\]
Then $\eta\cmp\rh=\id$ is equivalent to $\rhh=\eth\cmp\rh$ or
\[
  \sum_{n\ge2} \rh_nx^n
  = \sum_{n\ge2} \pas3{ \,\sum_{r>0}\,\sum_{n_1+\bots+n_r=n\vph0} \eta_r\rh_{n_1}\cbots\rh_{n_r} } x^n.
\]
Comparison of coefficients leads to
\[
  \rhr_n = s_n + \sum_{1<r<n} \,\sum_{n_1+\bots+n_r=n\vph0} s_r\rhr_{n_1}\cbots \rhr_{n_r},
  \qq
  n\ge2,
\]
where $\rhr_1=1$. It follows that $\rhr_n\ge s_n\ge m_n$ for all $n\ge1$, and finally that
\[
  \rhr_n \ge m_r m_{n_1}\cbots m_{n_r}.
\]
For $\rh$ to be in $E^m$, the weight $m$ thus has to be \fdb.
\end{proof}

\opt{For the sake of completeness and comparison we mention}

\begin{thm}
The following statements are equivalent.
\enum[z]
\item•1
$m$ is diff-stable.
\item•2
$E^m$ is \qtt{derivation closed}:\en with $f\in E^m$ also $f' \in E^m$.
\endenum
\end{thm}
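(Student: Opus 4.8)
The plan is to prove the equivalence of diff-stability and derivation-closedness by the same strategy used for the two preceding theorems: the forward implication via a direct estimate on weighted Taylor series, the reverse implication via the characteristic function $\eta$ of Lemma~\ref{char-func}.

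For \ref{1}$\imp$\ref{2}, recall from the lemma characterizing diff-stability that $m$ is diff-stable if and only if $\mp\msub m$, i.e.\ $m_{n+1}\le\lm^n m_n$ for all $n\ge1$ with some $\lm\ge1$. Now if $f\in E^m$ near a point $a$, then the Taylor coefficients of $f'$ at $a$ are $(f')_n = (n+1)f_{n+1}$. Hence, for suitable $U$ and $r$,
\[
  \nn{f'}^{\mp}_{a,r}
  = \sum_{n\ge1}\frac{(n+1)\n{f_{n+1}}}{\mp_n}\,r^{\n{n}}
  \le \frac{1}{r}\sum_{n\ge1}\frac{(n+1)\n{f_{n+1}}}{m_{n+1}}\,r^{n+1}
  \le \frac{2}{r}\,\nn{f}^m_{a,r},
\]
using $\mp_n=m_{n+1}$ and bounding the combinatorial factor; this shows $f'\in E^{\mp}$. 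Since $\mp\msub m$ means $E^{\mp}\subset E^m$, we conclude $f'\in E^m$, locally uniformly, so $E^m$ is derivation closed. (If one prefers to stay with $M$ rather than $m$, the estimate $m_n/m_{n-1}=\mu_n/n$ from the same lemma makes the bound even more transparent.)

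For \ref{2}$\imp$\ref{1}, I would argue by contraposition, exactly in the spirit of the \ref{2}$\imp$\ref{1} arguments above. Suppose $m$ is not diff-stable, i.e.\ $\sup_{n\ge2}(m_n/m_{n-1})\rn=\iny$. Take the characteristic function $\eta$ of Lemma~\ref{char-func} with $\eta_n=\j^{n-1}s_n$, $s_n\ge m_n$. Its derivative has Taylor coefficients $(\eta')_n$ of modulus $(n+1)s_{n+1}\ge (n+1)m_{n+1}$, so for any admissible $r$,
\[
  \nn{\eta'}^m_{0,r}
  \ge \sum_{n\ge1}\frac{(n+1)\,m_{n+1}}{m_n}\,r^{n+1}.
\]
If $\eta'\in E^m$ this series converges for some $r>0$, which forces $\limsup_{n}(m_{n+1}/m_n)\rn\le 1/r<\iny$, contradicting the failure of diff-stability. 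Hence derivation-closedness of $E^m$ (applied to the single function $\eta$) implies diff-stability.

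The only genuinely delicate point is the reverse direction: one must make sure the test function $\eta$ really detects the growth of the ratios $m_{n+1}/m_n$, and here the $n$-th root extraction is what converts pointwise ratio bounds into the radius-of-convergence statement. Because the general assumption guarantees $M_n=\mu_1\cbots\mu_n$ with $\mu_n$ increasing, one has $m_n/m_{n-1}=\mu_n/n$, so diff-stability is equivalent to $\sup_n\mu_n\rn<\iny$, and the convergence of the series above is governed by precisely this quantity; this identity is what makes the contradiction clean rather than asymptotic bookkeeping. Everything else is the routine interchange of a derivative with the weighted Taylor semi-norm, as already done repeatedly in the paper.
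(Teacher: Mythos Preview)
Your approach coincides with the paper's: the forward direction via $f'\in E^{\mp}$ together with $\mp\msub m$, and the converse via the characteristic function~$\eta$. The paper's own proof is a two-line sketch saying exactly this.

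One slip in your forward estimate: the displayed bound $\le \frac{2}{r}\nn{f}^m_{a,r}$ is false as written, since $(f')_n=(n+1)f_{n+1}$ carries the unbounded factor $n+1$, which cannot be absorbed into a constant~$2$. The repair is the standard one: if $\nn{f}^m_{a,r_0}<\iny$, then for any $r<r_0$ the quantity $(n+1)(r/r_0)^{n+1}$ is bounded, whence $\nn{f'}^{\mp}_{a,r}<\iny$. This is presumably what the paper means by ``follows from the definitions''. There is also a harmless typo in your reverse bound (the exponent should be $r^n$, not $r^{n+1}$), but the root-test argument that follows is correct and is exactly the ``characteristic functions as usual'' the paper invokes.
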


\begin{proof}
It follows from the definitions that with $f\in E^m$ we have $f'\in E^{\mp}$. If $m$ is diff-stable, then $\mp\msub m$ and $E^{\mp}\subset E^m$ and thus $f'\in E^m$.
The converse is proven with characteristic functions as usual.
\end{proof}

\begin{cor}
If\/ $E^m$ is holomorphically and derivation closed, then $E^m$ is also composition closed.
\end{cor}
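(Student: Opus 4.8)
The plan is to obtain the conclusion simply by chaining together the characterizations already established, so that no new estimate is needed. Suppose $E^m$ is holomorphically and derivation closed. First, holomorphic closedness together with Theorem~\ref{hol-stable} yields that the weight $m$ is \asm. Second, derivation closedness together with the immediately preceding theorem yields that $m$ is diff-stable.

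Now I would invoke Lemma~\ref{prop-diff}, which asserts precisely that a weight which is both \asm and diff-stable is automatically \fdb; hence $m$ is \fdb. Finally, Theorem~\ref{comp-stable} converts this property of the weight back into a property of the space: $m$ being \fdb is equivalent to $E^m$ being composition closed. This completes the argument, and it is entirely formal once the three equivalence theorems and Lemma~\ref{prop-diff} are in hand.

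Thus the corollary carries no independent content beyond Lemma~\ref{prop-diff}: all the real work sits in the three equivalence theorems and in that lemma, whose short proof (group the factors of $m_rm_{k_1}\cbots m_{k_r}$ so that the indices equal to $1$ come first, spend one bounded factor per remaining index to replace $m_{k_i}$ by a constant multiple of $m_{k_i-1}$, and then apply the \asm inequality to the shifted product $m_r m_{k_{s+1}-1}\cbots m_{k_r-1}$) has already been carried out in the excerpt. I do not expect a genuine obstacle; the only point worth a moment's care is to make sure the General Assumption of weak log-convexity is in force throughout, so that all the cited equivalences apply — but this has been standing since section~\ref{basic}.
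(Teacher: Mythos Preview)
Your proof is correct and follows exactly the same chain of implications as the paper's own argument: holomorphic closedness $\Rightarrow$ \asm\ via Theorem~\ref{hol-stable}, derivation closedness $\Rightarrow$ diff-stable via the preceding theorem, then Lemma~\ref{prop-diff} gives \fdb, and Theorem~\ref{comp-stable} gives composition closedness. The additional remarks you make about the General Assumption and the mechanics of Lemma~\ref{prop-diff} are accurate but not needed for the corollary itself.
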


\begin{proof}
If $E^m$ is holomorphically closed, than $m$ is almost submultiplicative by Theorem~\ref{hol-stable}. If $E^m$ is also derivation closed, then $m$ is also diff-stable and thus \fdb by Lemma~\ref{prop-diff}. So $E^m$ is composition closed by Theorem~\ref{comp-stable}.
\end{proof}

\paragraph{Generalizations}
The preceding results hold for \emph{local} Denjoy-Carleman classes~$E^m$, where $f$ belongs to~$E^m$, if any point in its domain has a \emph{neighbourhood} $U$ such that
\[
  \sup_{n\ge1} \frac{1}{n\fac}\frac{\nn*{D^n\f}_U}{m_n}\,r^n < \iny
\]
for some $r>0$.
But as noted in \cite{Sid-90} they also hold for Denjoy-Carleman classes $E^m(I)$ for an \emph{arbitrary} interval~$I$, where $f$ belongs to $E^m(I)$, if 
\[
  \sup_{n\ge1}  \frac{1}{n\fac}\frac{\nn*{D^n\f}_I}{m_n}\,r^n < \iny
\]
for some $r>0$, with $\nn\cd_I$ denoting the usual sup-norm on~$I$.
If $I$ is compact, these classes coincide with the local classes, and there is nothing new. Otherwise, it is no longer true that 
\[
  E^m(I) = E^{\mb}(I)
\]
with a \emph{weakly log-convex} sequence $\mb$. Instead, the regularized weight $\mb$ has to be defined slightly differently, depending on the nature of the interval~$I$ -- see ~\cite{Sid-90} and \cite{Man-52}. But the crucial fact is that also in this case characteristic \m{E^m}-functions exist, and the proofs remain essentially the same.

\section{Flows}  \label{flows}

\def\mx{\dot m}
\def\mtm{\mt\cd m}

We now establish the \m{E^m}-regularity for the flows of ode's. To this end we need to assume the weights to be strictly~\fdb. By Lemma~\ref{prop-fdb} this holds for log-convex weights, which is a standard assumption in this context\optx{ anyway}.

For the classical existence theorem for ode's it makes sense to allow for different differentiability properties with respect to time and space.
So let $\mx=\mtm$ be the product of two weights $\mt$ and $m$ defined \optx{in the obvious way} by 
\[
  \mx_{\dot n} = (\mtm)_{(\tilde n,n)} = \mt_{\tilde n}\cd m_n.
\]
Then $E^{\mx}=E^{\mtm}$ is a space of functions which are of class $E^{\mt}$ and $E^{m}$ with respect to two different subsets of coordinates, in this case time and space. 

The following theorem generalizes and extends results in \cite{Kom-80,Yam-91} in the finite dimensional case. 
It extends in the obvious way to vector fields depending on parameters.

\begin{thm}\label{ode}
Let $v$ be a time-dependent vector field of class~$E^{\mtm}$ with respect to time and space. If\/ $\mtm$ is strictly \fdb, then its flow~$\ph$, wherever defined, is also of class $E^{\mtm}$.\end{thm}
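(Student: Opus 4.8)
The plan is to set up the standard Picard/variation-of-constants scheme for the ode $\dot x = v(t,x)$ but to carry it out directly at the level of weighted Taylor series, so that the only analytic ingredient is Theorem~\ref{comp-gen} (composition of $E^m$-maps) together with the fact that $E^{\mtm}$ is an algebra and is holomorphically closed — both of which follow from strict \fdb via Lemma~\ref{prop-fdb} and Theorem~\ref{hol-stable}. Concretely, fix a point in the domain of the flow, normalize so that the relevant data are near the origin in time and space, and write the flow as $\ph = \id + \phh$, where $\phh(t,x) = \int_0^t v(\tau,\ph(\tau,x))\,d\tau$. First I would observe that differentiation in $t$ costs nothing for the class: integration in $t$ maps $E^{\mt}$-dependence in time back into $E^{\mt}$-dependence (a weighted Taylor series in $t$ is just divided by $n+1$ in degree $n$), so the genuine issue is the composition $v \cmp (\id,\ph)$ together with the implicitly defined $\ph$.

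The key steps, in order, are: (i) record that $v\in E^{\mtm}$ and that, since $\mtm$ is strictly \fdb, $E^{\mtm}$ is composition closed, an algebra, and holomorphically/inverse closed; (ii) rephrase the flow equation as a fixed point equation $\phh = \mathcal{K}(\phh)$ with $\mathcal{K}(u)(t,x) = \int_0^t v(\tau, x+u(\tau,x))\,d\tau$, whose right-hand side, as a composition followed by an integration, maps a weighted Taylor series majorant to a weighted Taylor series majorant with a controlled contraction in the radius, exactly as in the proof of Theorem~\ref{comp-stable}\,\ref{1}$\imp$\ref{3}; (iii) since the map $\mathcal{K}$ does not mix up the lowest-order term and is degree-increasing in the obvious sense, apply it to the $N$-th Taylor polynomial $T^N\phh$ of the (a priori only smooth) flow and use the Main Lemma~\ref{comp-wm} to bound $\nn{T^N\phh}^{\mtm}_{U,r}$ by a quantity independent of $N$ — choosing the neighbourhood $U$ and radius $r$ small enough that the seminorm of $v$ is small and the composition estimate of Theorem~\ref{comp-gen} applies uniformly; (iv) let $N\to\iny$ to conclude $\phh\in E^{\mtm}$, hence $\ph\in E^{\mtm}$, locally around the chosen point, and therefore everywhere the flow is defined.

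I expect the main obstacle to be step~(iii): making the a priori estimate on the Taylor polynomials $T^N\phh$ genuinely uniform in $N$. One must check that the weighted majorant of $\mathcal{K}(T^N\phh)$ agrees with that of $T^N\mathcal{K}(\phh)$ up to degree $N$ (so that the fixed point relation closes up degree by degree), that the time-integration truly only improves the estimate rather than merely leaving it unchanged, and that the radius loss factor $\lm$ coming from the \fdb constant in the Main Lemma can be absorbed by shrinking $r$ once and for all — so that the contraction constant stays below $1$ across all degrees simultaneously. Once the bookkeeping of these degree-truncated majorants is organized cleanly, the rest is the same soft argument already used twice in section~\ref{s-char}, with no genuine estimates required beyond Stirling-type inequalities already in hand. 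The anisotropic weight $\mtm$ plays no essential role beyond the remark in step~(i) that it, too, is strictly \fdb by hypothesis, so that all of section~\ref{comps} applies verbatim.
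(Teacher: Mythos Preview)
Your approach is workable but differs from the paper's in a way worth noting. You set up a Picard fixed-point scheme for $\phh$ and plan to close it by a contraction estimate on truncated weighted Taylor polynomials, imitating the inverse-function argument of Theorem~\ref{comp-stable}. The paper instead uses a pure \emph{majorant method}: starting from $\del_t\ph = v\cmp\ph$, it applies the Main Lemma directly to the formal Taylor series to obtain the differential inequality $\del_t(\Md_a\ph) \cle M_av\cmp\Md_a\ph$, and then observes that the right-hand side has \emph{analytic} coefficients $M_av$. Hence the majorant equation $\del_t g = M_av\cmp g$, $g|_{t=0}=\id$, has an analytic solution $g$ by the classical Cauchy theorem for analytic ODEs, and $\Md_a\ph\cle T_ag$ gives the bound $\nn{\ph}^{\mtm}_{a,r}\le\nn{g(r)}_\iny<\iny$ with no contraction to verify. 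Your route trades this clean reduction for the bookkeeping you yourself flag in step~(iii): ensuring the integration-in-$t$ genuinely produces a small factor in the weighted seminorm (this needs the time-radius small, not just a Stirling inequality) and that the $\lambda$-dilation from the Main Lemma does not accumulate under iteration. Both approaches ultimately rest on the Main Lemma, but the paper's buys simplicity by outsourcing the convergence question to the analytic category, while yours stays self-contained at the cost of a more delicate uniform estimate.
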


\begin{proof}
Consider the Taylor expansion of the flow $\ph$ at a point $a=(t,x)$ with $t=0$. The differential equation $\del_t\ph = v\cmp \ph$ and $\ph(0)=a$ lead to
\[
  \del_t(T_a\ph) 
  = T_a(\del_t\ph)
  = T_a(v\cmp\ph)
  = T_av \cmp \Td_a\ph.
\]
On the left hand side the constant term drops out so we get
\[
  \del_t(\Td_a\ph) = T_av \cmp \Td_a\ph.
\]
Applying the Main Lemma and writing $M$ for $M^{\mt\cd m}$,
\[
  \del_t(\Md_a\ph) \cle M_av \cmp \Md_a\ph.
\]
Now we may define recursively a formal Taylor series $g$ through
\[
  \del_t g = M_av\cmp g,
  \qq
  \eval2{g}_{t=0} = \id.
\]
We then have
\mms $
  M_a\ph \cle T_ag
$.
But $g$ is the formal solution of the differential equation $\del_t g = M_av\cmp g$. As $M_av$ is analytic on some neighborhood of~$a$, $g$ is analytic on some smaller neighborhood of~$a$. Thus, there exists some positive $r$ such that
\[
  \nn*{\ph}^{\mtm}_{a,r} 
  = \nny*{\Md_a\ph(r)}
  \le \nny{g(r)}
  < \iny.
\]

The seminorm $\nn{v}^{\mtm}_{a,r}$ is uniformly bounded on some neighborhood $V$ of~$a$. By contuinity we can choose another neighborhood $U\subset V$ such that $\ph(U)\subset V$. It then follows that also
\mms $
  \nn*{\ph}^{\mtm}_{U,r} < \iny
$.
So this local map is of class~$E^{\mtm}$.

Finally, at any point the flow map $\ph$ can be written as the composition of finitely many local flow maps. As $E^{\mtm}$ is stable under composition, the general statement follows.
\end{proof}

For the sake of completeness we consider the corresponding Cauchy-Kowaleskaya problem for pde's in Appendix~\ref{s-CK}.

\section{Proof of the Siegel-Sternberg theorem -- Part I}
\label{part-1}

We now turn to the proof of the Siegel-Sternberg theorem. It consists of two very different parts. First, a small divisor problem is solved to linearize the map $g$ up to a flat term -- without using explicit small divisor conditions. Second, hyperbolicity is used to also remove that flat term within the class of \m{E^m}-maps.

\begin{thm}
Consider a smooth map $g$ of class $E^m$ in a neighbourhood of a fixed point. If its linearization at the fixed point is nonresonant, then $g$ can be linearized up to a flat term by a local diffeomorphism $\ph$ of class $E^{m\star w}$, where $w$ is any weight dominating the associated nonresonance function $\Om$ such that $m\star w$ is log-convex and strongly non-analytic.
\end{thm}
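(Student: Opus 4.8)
The plan is to implement the classical two-step normal form construction, but to carry out the formal (small divisor) step entirely at the level of weighted majorant series, so that the weight $w$ enters only through the hypothesis of the Main Lemma~\ref{comp-wm}. First I would reduce to the situation $g = \Lm + \gh$ with $\gh$ of order $\ge 2$ and $\Lm = \mathrm{diag}(\lm_1,\bots,\lm_s)$ (after a linear change of coordinates bringing $\Lm$ to triangular, indeed diagonal, form; the off-diagonal entries in the Jordan case are harmless and can be absorbed). The goal of Part~I is a diffeomorphism $\ph = \id + \phh$, $\phh$ of order $\ge 2$, with $\ph\inv\cmp g\cmp\ph = \Lm + R$ where $R$ is flat at the origin. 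I would construct $\ph$ as an infinite composition (or the limit of finite compositions) $\ph = \lim_N \ph_1 \cmp \ph_2 \cmp \cbots \cmp \ph_N$, where $\ph_j$ kills the homogeneous terms of a suitable range of degrees, solving at each homogeneous degree the linearized (cohomological) equation $\phh_k(\lm^k - \lm_i) = (\text{known lower-order data})_k$. Nonresonance guarantees solvability; the size of the solution at multidegree $k$ is amplified by at most $\n{\lm^k-\lm_i}\inv \le \Om(\n{k})$.

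The heart of the matter is the bookkeeping: I would set up a single weighted Taylor majorant estimate of the form $\Md^{m\star w}_0 \phh \cle \Phi$, where $\Phi$ is an explicit analytic function on a small polydisc, obtained by a recursive majorant scheme. At each step the composition of power series is controlled by the Main Lemma with the pair of weights $(m, m\star w)$ — this is exactly why the hypothesis ``$w$ dominates $\Om$'' is phrased the way it is: the dyadic sum $\sum_{\nu\le\log_2\n{k}} 2^{-\nu}\log\Om(2^{\nu+1}) \le a + \n{k}\inv\log w_k$ is precisely the accumulated logarithmic loss incurred by iterating the scheme down the dyadic scales of degrees, so that after summing, the $k$-th coefficient has picked up a factor bounded by $w_k$ up to a geometric factor $\lm^{\n{k}}$. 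Thus the majorant series for $\phh$ in the $m$-weighting, when divided by $w_k$, still converges on a polydisc of positive radius, which is the statement $\phh \in E^{m\star w}$. I would check that $m\star w$ being log-convex makes it (weakly) submultiplicative and hence $E^{m\star w}$ an algebra, and that it being strongly non-analytic is what is needed later (Part~II) and also ensures the class is genuinely non-trivial; these two conditions also let me pass, if necessary, to an equivalent regularized weight without changing the space.

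Concretely, the key steps in order: (1) normalize $\Lm$ and rescale so $\nn{\gh}^m_{0,r}$ is as small as needed; (2) fix the dyadic blocking of degrees $2^\nu < \n{k} \le 2^{\nu+1}$ and define $\ph_\nu = \id + \phh_\nu$ removing the block-$\nu$ terms of the current error by solving the cohomological equation block by block, recording the amplification $\le \Om(2^{\nu+1})$; (3) estimate $\nn{\phh_\nu}^{m}_{0,r_\nu}$ on a shrinking sequence of radii $r_\nu \downarrow r_\infty > 0$ using Theorem~\ref{comp-gen} (valid by the Main Lemma applied with weights $m$ on the inner series and $m\star w$ on the outer, using that $w$ dominates $\Om$) together with weak submultiplicativity of $m$; (4) sum the $\nu$-contributions — here the domination inequality is used in the form that $\prod_{\nu\le\log_2\n{k}} \Om(2^{\nu+1})^{1/2^\nu} \le \e^{a}\, w_k^{1/\n{k}}$, so the total amplification of the degree-$k$ coefficient is $\le C^{\n{k}} w_k$; (5) conclude $\phh = \lim \phh_{\le N} \in E^{m\star w}$ and that $\ph\inv\cmp g\cmp\ph - \Lm$ is flat, since every homogeneous degree is eventually annihilated; (6) note $\ph$ is a genuine local diffeomorphism since $\phh$ has order $\ge 2$. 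The main obstacle I expect is exactly step~(4): controlling how the small-divisor losses from \emph{all} the dyadic scales below $\n{k}$ accumulate and compound with the combinatorial/composition losses, and showing that this compounded loss is bounded by precisely $w_k$ (and no worse) — this is the computation that the ``dominates'' inequality is rigged to make work, and getting the dyadic induction and the geometric radius-shrinkage to close simultaneously is the delicate point; everything else (solvability by nonresonance, the algebra property, flatness of the remainder) is routine given the Main Lemma and Theorem~\ref{comp-gen}.
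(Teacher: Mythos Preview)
There is a genuine gap in step~(5), tied to a misreading of where strong non-analyticity enters.

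Your iteration produces at best a \emph{formal} power series $\phh=\sum_k\ph_kx^k$ with $\n{\ph_k}\le C^{\n{k}}m_kw_k$: the limit $\lim_N\ph_1\cmp\cbots\cmp\ph_N$ exists only in the sense that each Taylor coefficient eventually stabilizes, not as a smooth map. Since no Bruno-type bound on $\Om$ is assumed, there is no reason for the radii $r_\nu$ to remain bounded away from zero in any $E$-norm, and the infinite composition does not converge to a function. The missing step is to invoke the surjectivity of the Borel map $T_0\colon E^{m\star w}\to F^{m\star w}$ to obtain a \emph{bona fide} map $\ph=\id+\phh\in E^{m\star w}$ whose Taylor jet at the origin is the formal series just constructed; this surjectivity holds precisely when $m\star w$ is strongly non-analytic~\cite{Pet-88}. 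So strong non-analyticity is used \emph{here}, in Part~I, not only later as you write. One then gets $\ph\inv\in E^{m\star w}$ by inversion-closedness (log-convexity of $m\star w$ and Theorem~\ref{comp-stable}), and $\ph\inv\cmp g\cmp\ph=\Lm+h$ with $h$ flat.

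The paper's route to the coefficient bound also differs from yours. It does not iterate: it solves $g\cmp\ph=\ph\cmp\Lm$ as a single formal recursion, then uses Siegel's majorant factorization $\n{\ph_k}/m_k\le\sg_{\n{k}}\Dl_k$, where the $\sg_n$ solve a purely combinatorial recursion (and hence grow only geometrically) while $\Dl_k=E_k\max_{k_1+\bots+k_r=k}\Dl_{k_1}\cbots\Dl_{k_r}$ carries all the small divisors. The bound you assert in step~(4) --- that the accumulated amplification at degree~$k$ is at most $\prod_{2^\nu<\n{k}}\Om(2^{\nu+1})^{\n{k}/2^\nu}$ --- is exactly the content of the \emph{Counting Lemma}: among the factors $E_l$ dividing $\Dl_k$, at most $2\n{k}/n-1$ exceed $\eta\Om(n)$. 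This is a nontrivial combinatorial fact (Bruno's argument, resting on Siegel's observation that two large factors $E_l$, $E_{l'}$ force $\n{l-l'}\ge n$), and it does not drop out of a dyadic iteration for free; tracking how many copies of $\Om(2^{\nu+1})$ can appear in a fixed degree-$k$ coefficient after composing all the $\ph_\nu$ is the same bookkeeping in a different guise. Your sketch of step~(4) does not supply it.
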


Here, \emph{strongly non-analytic} means that
\[
  \sup_{q\ge1} \frac{\mu_q}{q} \sum_{k\ge q} \frac{1}{\mu_k} < \iny.
\]
For instance, Gevrey weights with $M_k = k\fac^s$ and $\mu_k = k^s$ have this property for all~$s>1$.
This is the necessary and sufficient condition for the \emph{Borel map}
\[
  T_a\maps E^m \to F^m, \q f \mapsto T_af = \sum_{k\in\Lm} \frac{D^k\f(a)}{k\fac}(x-a)^k
\]
to be \emph{onto}, where $F^m$ denotes the space of formal power series at a fixed point~$a$ supplied with the norms $\nn\cd^m_{a,r}$.
See~\cite{Pet-88} for this result and more details. In other words, if -- and only if -- the weight $m$ satisfies the condition of strong non-analyticity, then \emph{any} formal power series 
$\sum_{k\in\Lm} f_k(x-a)^k$ with
\[
  \sum_{0\ne k\in\Lm} \frac{\n{f_k}}{m_k}r^k < \iny
\]
for some $r>0$ is the Taylor series of an \m{E^m}-function defined in some neighbourhood of~$a$.

\begin{proof}
The construction of a formal solution to the equation
\mms $
  g\cmp\ph = \ph\cmp\Lm
$
is simple. Writing $\ph=\id+\phh$ and $g=\Lm+\gh$, expanding into formal power series
\[
  \ph = \sum_{\n{k}\ge1} \ph_kx^k, 
  \qq
  \gh = \sum_{\n{l}\ge2} g_lx^l,
\]
and collecting terms linear in $\ph_k$ on the left side,
this equation is equivalent to
\[
  \sum_{\n{k}\ge2} \Lm_k\ph_kx^k = \sum_{\n{l}\ge2} g_l \pas3{ \sum_{\n{k}\ge1} \ph_kx^k}^{\lox{l}}
\]
with
$
  \Lm_k = \lm^k I - \Lm
$.
As all $\Lm_k$ are regular by nonresonance, all coefficients $\ph_k$ are determined recursively in terms of the $g_l$. 

It remains to obtain explicit bounds on their growth. As $\Lm$ is semi-simple by nonresonance we can choose an appropriate norm so that
\[
  E_k \defeq \nn*{\Lm_k\inv} = \max_{1\le i\le s} \n*{\lm^k-\lm_i}\inv.
\]
Hence
\[
  \n{\ph_k} \le E_k \sum_{l\le k} \,
                    \sum_{k_1+\bots+k_r=k} \n{g_l} \n*{\ph_{k_1}}\cbots\n*{\ph_{k_r}}
\]
with $r=\n{l}\ge2$ as usual.
For a strict \fdb weight $m$, we thus have
\[
  \frac{\n{\ph_k}}{m_k}
  \le E_k \sum_{l\le k} \, \sum_{k_1+\bots+k_r=k} 
             \frac{\n{g_l}}{m_l} \frac{\n*{\ph_{k_1}}}{m_{k_1}}\cbots\frac{\n*{\ph_{k_r}}}{m_{k_r}}.
\]
Passing to an equivalent weight~$m$, we may assume that
$
  \nn{\gh}^m_{0,1} \le 1
$,
or
\[
  \frac{\n{g_l}}{m_l} \le 1, \qq \n{l}\ge2.
\]
Following Siegel \cite{Sie-42} we then obtain
\[
  •ph-m
  \frac{\n{\ph_k}}{m_k} \le \sg_{\n{k}}\Dl_k
\]
with the inductively defined sequences
\[ \begin{aligned}
  \sg_1 &= 1, \qq&
  \sg_n &= \sum_{n_1+\bots+n_r=n} \sg_{n_1}\cbots\sg_{n_r}, &
  n&\ge2, \\
  \Dl_e &= 1, &
  \Dl_k &= E_k \max_{k_1+\bots+k_r=k} \Dl_{k_1}\cbots\Dl_{k_r}, &\qq
  \n{k}&\ge2,
\end{aligned} \]
where $e$ denotes any multiindex of length~$1$ and $r\ge2$.

The $\sg_n$ are the coefficients of the unique \emph{analytic} function $\sg=\id+\hat\sg$ solving $(\sg-t)=\sg^2/(1-\sg)$. 
Hence, they grow like a power of $n$, and 
\[
  \sup_{n\ne0} \frac{1}{n} \log \sg_n < \iny.
\]
The $\Dl_k$, on the other hand, represent the accumulation of near resonances and usually grow more rapidly.
To obtain useful bounds we essentially follow Bruno's argument \cite{Bru-72,Pos-86}.

In the definition of $\Dl_k$ the maximum is attained for some decomposition $k=k_1+\bots+k_r$, which we may choose in some definite way. Proceeding like this with every factor $\Dl_{k_i}$ we end up with some well defined decomposition
\[
  \Dl_k = E_{l_0} E_{l_1}\cbots E_{l_s},
  \qq
  l_0=k, \q 2\le \n{l_1},\bots,\n{l_s}<\n{k}.
\]
Let
\[
  N_n(k)
  = \card \set1{E_{l_i} \mathop\mid \Dl_k : E_{l_i}>\eta\Om(n)}, 
  \qq
  \eta = 4\max_{1\le i,j\le s} \n*{\lm_i/\lm_j},
\]
be the number of factors in $\Dl_k$ exceeding $\eta\Om(n)$.
The following is the key estimate which is due to Bruno \cite{Bru-72} for admissible small divisiors. But in fact it is completely independent of any growth properties of the function~$\Om$. 

\begin{cl}
For $n\ge2$,
\[
  N_n(k) \le \ccases{0, &\n{k}\le n, \\[3pt] 2\Frac{\n{k}}{n}-1, & \n{k}>n.}
\]
\end{cl}

With this bound the estimate of $\Dl_k$ is straightforward. We have
\[
  \card \set1{ E_{l_i}\mathop\mid \Dl_k : 
          \eta\Om(2^\nu) \le E_{l_i} < \eta\Om(2^{\nu+1}) } \le \frac{\n{k}}{2^{\nu-1}},
\]
and we only need to consider those $\nu$ with $2\le 2^\nu<\n{k}$. Therefore,
\begin{multline*}
  \frac{1}{\n{k}} \log \Dl_k
  \le \smash[b]{\sum_{1\le\nu<\log_2\n{k}}} \frac{1}{2^{\nu-1}} \log \eta\Om(2^{\nu+1}) \\[-5pt]
  \le 2\log\eta + 2 \sum_{2\le2^\nu<\n{k}} \frac{ \log \Om(2^{\nu+1})}{2^{\nu}}.
\end{multline*}
So for any weight $w$ dominating $\Om$ in the sense that
\[
  \sum_{\nu\le\log_2\n{k}} \frac{\log \Om(2^{\nu+1})}{2^\nu} \le a + \frac{\log w_k}{\n{k}},
  \qq
  \n{k}\ge2,
\]
we have
\[
  \sup_{k\ne0} \frac{1}{\n{k}} \log \frac{\Dl_k}{w_k} < \iny.
\]
Together with~\eqref{ph-m} we conclude that
\[
  \sup_{k\ne0} \frac{1}{\n{k}} \log \frac{\n{\ph_k}}{m_kw_k} < \iny.
\]

So, at least formally, $\ph$ is of class $E^{m\star w}$. Now, if $m\star w$ is analytic, then $\ph$ is also convergent, and we are finished. Otherwise, we may increase $w$ if necessary so that $m\star w$ is log-convex and strongly non-analytic. Then, by the surjectivity of the Borel map~$T_0$, there exists a \emph{bona fide} map
\[
  \ph=\id+\phh\in E^{m\star w},
  \qq
  T_0\phh = \sum_{\n{k}\ge2} \ph_kx^k.
\]
Its local inverse $\ph\inv$ is also of class $E^{m\star w}$ by Theorem~\ref{comp-stable}, so it is a local \m{E^m}-diffeomorphism. Consequently,
\[
  \ph\inv\cmp g\cmp \ph = \Lm + h
\]
is also a $E^{m\star w}$-map with a remainder, which is flat at the origin. This proves the theorem.
\end{proof}

\begin{proof}[Proof of the Counting Lemma]
Fix $n$ and assume $\n{k}>n$.
By construction,
\[
  \Dl_k = E_k\Dl_{k_1}\cbots\Dl_{k_r}
\]
with $k_1+\bots+k_r=k$ and $\n{k}>\n{k_1}\ge\bots\ge\n{k_r}\ge1$. In this decomposition, only $\n{k_1}$ may be greater than
\[
  K \defeq \max(\n{k}-n,n).
\]
If this is the case, we decompose $\Dl_{k_1}$ in the same way. Repeating this step at most $n-1$ times, we finally obtain a decomposition
\[
  \Dl_k = E_k E_{k_1} \cbots E_{k_\nu} \Dl_{l_1}\cbots\Dl_{l_\mu},
  \qq
  \nu\ge0, \q \mu\ge2,
\]
where 
\[
  k>k_1>\bots>k_\nu,
  \q
  l_1+\bots+l_\mu=k,
  \q
  \n*{k_\nu} > K \ge \n{l_1}\ge\bots\ge\n*{l_\mu}.
\]
The point is that \emph{at most one} of the \m{E}'s count towards $N_n(k)$. This is the consequence of Siegel's observation~\cite{Sie-42} that if 
\[
  E_k, E_{k'} > \eta\Om(n), \qq k>k',
\]
then an elementary calculation shows that
\[
  \n*{\lm^{k-k'}\lm_i-\lm_j} > \Om(n)
\]
for some $1\le i,j\le s$. By the definition of $\Om$, this implies that $\n{k-k'}+1>n$, or
$
  \n{k-k'} \ge n
$.
So we obtain
\[
  N_n(k) \le 1+N_n(l_1)+\bots+N_n(l_\mu).
\]
Choose $0\le\rh\le\mu$ so that 
$\n*{l_\rh}>n\ge\n*{l_{\rh+1}}$. Arguing by induction we obtain
\begin{align*}
  N_n(k)
  &\le 1+N_n(l_1)+\bots+N_n(l_\rh) 
   \\[-8pt]
  &\le 1+2\frac{\n{l_1+\bots+l_\rh}}{n}-\rh 
   \en\le\en \ccases{ 1, &\rh=0, \\[4pt]
                2 \Frac{\n*{k}-n}{n}, & \rh=1, \\[4pt]
                2 \Frac{\n*{l_1+\bots+l_\rh}}{n}-1, & \rh\ge2,
              } 
   \\[-8pt]
  &\le 2\frac{\n{k}}{n}-1.
\end{align*}
This proves the Counting Lemma.
\end{proof}

\section{Proof of the Siegel-Sternberg theorem -- Part II}

\begin{thm} \label{th-flat}
Suppose $g$ has a hyperbolic fixed point with a flat nonlinearity of class~$E^m$. If\/ $m$ is log-convex and strongly non-analytic, then the nonlinearity can be removed by a local diffeomorphism of class~$E^{\mp}$.
\end{thm}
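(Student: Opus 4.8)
The plan is to reach~$\Lm$ from~$g$ along a path and to realise the conjugacy as a flow, so that Theorems~\ref{comp-stable} and~\ref{ode} carry the regularity class. Place the fixed point at the origin and write $g=\Lm+h$ with $h$ flat at~$0$ and of class~$E^m$ on a neighbourhood~$U$. If $m$ is log-convex, so is its left shift~$\mp$, and $\mp$ is again strongly non-analytic; being log-convex, $\mp$ is strictly \fdb by Lemma~\ref{prop-fdb}, so $E^{\mp}$ is an algebra, is closed under composition and inversion by Theorem~\ref{comp-stable}, and is closed under flows by Theorem~\ref{ode}. Recall that $E^m\subset E^{\mp}$ and $Dh\in E^{\mp}$; the latter is the source of the loss of one derivative. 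Note that, in contrast to Part~I, no nonresonance will be needed here: hyperbolicity together with the flatness of~$h$ will suffice.

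First I would put $g_t=\Lm+th$ for $t\in[0,1]$. Since $Dg_t(0)=\Lm$ for every~$t$, shrinking~$U$ makes all $g_t$ uniformly hyperbolic diffeomorphisms of~$U$ with common stable and unstable dimensions. I look for a family $\ph_t=\id+\phh_t$ of local diffeomorphisms with $\ph_0=\id$, each $\phh_t$ flat at~$0$, solving $g_t\cmp\ph_t=\ph_t\cmp\Lm$, and I prescribe $\ph_t$ as the flow of a time-dependent vector field~$X_t$, that is $\del_t\ph_t=X_t\cmp\ph_t$. Differentiating the conjugacy equation in~$t$ and composing on the right with~$\ph_t\inv$ turns it into the \emph{cohomological equation}
\[
  X_t\cmp g_t - (Dg_t)\,X_t = h
\]
for the flat vector field~$X_t$. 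Conversely, if $X_t$ solves this equation for every~$t$, then $R_t := g_t\cmp\ph_t - \ph_t\cmp\Lm$ satisfies a linear homogeneous ode in~$t$ with $R_0=0$, hence $R_t\equiv0$; so $\ph:=\ph_1$ conjugates $g=g_1$ to~$\Lm$. Since $\phh_1$ is flat, $\ph$ is a local diffeomorphism, and if $X_t$ is of class~$E^{\mp}$ in~$(t,x)$ then Theorem~\ref{ode} gives $\ph_t\in E^{\mp}$ for all~$t$, which is the assertion.

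It remains to solve the cohomological equation in class~$E^{\mp}$, uniformly in~$t$. Using the hyperbolic splitting of~$g_t$ near~$0$, on the stable part the operator $X\mapsto X\cmp g_t-(Dg_t)\,X$ is inverted by the forward orbit sum $-\sum_{n\ge0}C^t_n\,(h\cmp g_t^n)$, where $C^t_n$ is the derivative cocycle of $g_t$ along its first $n$ iterates; this series converges because $h$ is flat at~$0$, so $h\cmp g_t^n$ tends to~$0$ along the local stable manifold faster than the at most exponential growth of~$C^t_n$, and dually the backward orbit sum handles the unstable part. On the remainder of~$U$ one reconciles the two expressions by a fundamental-domain argument, producing a smooth vector field, flat at~$0$, which is of class~$E^{\mp}$ on each piece because $h\in E^{\mp}$, $Dg_t\in E^{\mp}$ and $E^{\mp}$ is an algebra closed under composition. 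The pieces are then assembled into a single $E^{\mp}$ vector field~$X_t$, depending smoothly on~$t$, by a version of the Whitney extension theorem in the Denjoy-Carleman class~$E^{\mp}$; this is where the log-convexity and strong non-analyticity of~$\mp$ enter. A convenient preliminary reduction is to straighten the local stable and unstable manifolds of~$g$ to the coordinate subspaces by an $E^{\mp}$-diffeomorphism first, which makes the orbit sums and the gluing cleaner.

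The routine points are the uniform hyperbolicity along the path, the Gronwall-type uniqueness for~$R_t$, and the two appeals to Theorems~\ref{comp-stable} and~\ref{ode}. The main obstacle is the third step: the orbit-sum solution lives naturally only along the stable and unstable manifolds, and turning it into a bona fide $E^{\mp}$ vector field on a full neighbourhood, with norm bounds depending continuously on~$t$, requires making the ultradifferentiable Whitney extension work quantitatively and controlling the convergence of the orbit sums against the flatness of~$h$ uniformly in~$t$.
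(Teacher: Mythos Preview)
Your architecture matches the paper's: the homotopy $g_t=\Lm+th$, the cohomological equation $X_t\cmp g_t-(Dg_t)X_t=h$, the orbit-sum inversion, and the appeal to Theorems~\ref{comp-stable} and~\ref{ode} for the $E^{\mp}$-regularity of the flow are exactly what the paper does. The preliminary straightening of the local stable and unstable manifolds is also present (the paper carries it out in~$E^m$, using Irwin's construction, before anything else).

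The gap is in how you make the orbit sums converge on a \emph{full} neighbourhood of the fixed point. Your forward sum $-\sum_{n\ge0}C^t_n\,(h\cmp g_t^n)$ converges only where $g_t^n(x)\to 0$, i.e.\ on the local stable manifold, because $h$ is flat only at the origin; likewise the backward sum lives only on~$W^u$. A ``fundamental-domain argument'' will not produce an $E^{\mp}$ solution through the fixed point, and a Whitney extension of a vector field known only on $W^s\cup W^u$ will not, in general, satisfy the cohomological equation off those submanifolds. So as written the third step does not close.

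The paper's device is to apply the Borel/Whitney machinery \emph{to~$h$}, not to the solution. Using the strong non-analyticity of~$m$ (this is precisely where that hypothesis enters), one builds an $E^m$-splitting $h=h^s+h^u$ with $h^u$ flat along all of~$E^s$ and $h^s$ flat along all of~$E^u$ (after straightening). One then cuts off~$g_t$ to a global diffeomorphism equal to~$\Lm$ outside a cube~$K$, so that every forward orbit in~$K$ converges exponentially to~$E^u\cap K$ and every backward orbit to~$E^s\cap K$. Now the explicit formula
\[
  X_t=\sum_{i\ge1}Dg_t^{-i}\cdot h^s\cmp g_t^{\,i-1}
       -\sum_{i\le0}Dg_t^{-i}\cdot h^u\cmp g_t^{\,i-1}
\]
converges in $\nn{\cdot}^m_{K,r}$ on the whole cube: the flatness of~$h^s$ on~$E^u$ beats the cocycle growth in the first sum, and dually for the second. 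No gluing is needed; the loss of one derivative comes solely from the factors $Dg_t^{-i}\in E^{\mp}$. If you rewrite your third paragraph around this splitting of~$h$ (rather than a splitting of the solution), your outline becomes the paper's proof.
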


\begin{proof}
Place the fixed point at the origin.
As its linearization $\Lm=Dg(0)$ is supposed to be hyperbolic, there exists a splitting $E^s\x E^u$ of the entire space into an attracting and expanding invariant subspace of~$\Lm$. 
The corresponding local stable and unstable manifolds $W^s$ and $W^u$ are then given locally as graphs of smooth maps
\[
  u\maps E^s\to E^u, \qq v\maps E^u \to E^s,
\]
which vanish up to first order at the origin. In coordinates $(x,y)\in E^s\x E^u$,
\[
  \ps\maps (x,y) \mapsto (x+v(y),y+u(x))
\]
thus defines a smooth local diffeomorphism which flattens those local manifolds so that
\[
  W^s \subset E^s\x\set{0}, \qq
  W^u \subset \set{0}\x E^u.
\]
Following Irwin's construction \cite{Irw-80} of those invariant manifolds, $u$ and $v$ are as smooth as the mapping~$g$ itself. Hence they are of class~$E^m$, and so is the diffeomorphism~$\ps$. As we assume $m$ to be log-convex, its inverse map $\ps\inv$ is also~$E^m$, and the same holds for the transformed map
$\ps\inv\cmp g\cmp \ps$.

So we may consider a map $g=\Lm+h$ of class~$E^m$ in coordinates such that $\Lm$ is hyperbolic, its local invariant manifolds are straightened out, and its nonlinearity $h$ is still flat at the origin.

The next step is to split $h$ into two terms which are flat at either $W^s$ or~$W^u$.
As $m$ is supposed to be strongly non-analytic there exists \cite{Pet-88} a family $(\ch_k)_{k\in\Lm^s}$ of \m{E^m}-functions on $E^s$ such that 
\[
  D^{k'}\!\ch_k(0) = \dl_{kk'}
\]
for all derivatives, and for any \m{E^m}-function $f$ on~$E^s$,
\[
  \phi(x) = \sum_{k\in\Lm^s} D^k\f(0)\ch_k(x)
\]
defines again an \m{E^m}-function. In other words, $\phi$ is an \m{E^m}-extension of the \c^\iny-jet of $f$ at the origin.

Fix such a family $(\ch_k)$ and define $h^u$ by
\[
  h^u(x,y) = \sum_{k\in\Lm^s} D^k h(0,y)\ch_k(x).
\]
Then $h^u$ is of class $E^m$ in all coordinates, $\eval{h^u}_{W^u} = h$, and 
\[
  \eval{D^kh^u}_{W^u} = \eval{D^kh}_{W^u}, \qq \eval{D^lh^u}_{W^s} = 0
\]
for all $k\in\Lm^s$ and $l\in\Lm^u$, where $D^k$ and $D^l$ refer to derivatives in the direction of $E^s$ and $E^u$, respectively. Hence $h^u$ coincides with $h$ on $W^u$ and is flat on~$W^s$.

For $h^s=h-h^u$ we then immediately obtain $\eval{h^s}_{W^s} = h$ and
\[
  \eval{D^kh^s}_{W^u} = 0
\]
for all~$k\in\Lm^s$.
So $h^s$ coincides with $h$ on $W^s$ and is flat on~$W^u$. Hence,
\[
  h = h^s+h^u
\]
is a decomposition of $h$ with the required properties.

We remark that the definition of $h^u$ amounts to a special version of the Whitney extension theorem for Denjoy-Carleman classes, but avoids the many restrictions on the weight $m$ which seem to be necessary in the general case \cite{Bru-80,Val-11}.

We are now ready to remove the nonlinearity $h$ by a deformation argument first applied in~\cite{Mos-65}.
In general, let $(g_t)_{0\le t\le 1}$ be a family of maps, and suppose we are looking for a family of diffeomorphisms $(\ph_t)_{0\le t\le 1}$ such that
\[
  \ph_t\inv \cmp g_t \cmp \ph_t = g_0, \qq 0\le t\le 1.
\]
If we write $\ph_t$ as the time-\m{t}-map of a time dependent vector field $X_t$ so that
\mms $
  \dot \ph_t = X_t\cmp\ph_t
$,
then differentiation of $g_t\cmp\ph_t = \ph_t\cmp g_0$ leads to
\[
  \dot g_t\cmp\ph_t+Dg_t\cmp\ph_t\cd X_t\cmp\ph_t
  = X_t\cmp\ph_t\cmp g_0
  = X_t\cmp g_t\cmp\ph_t,
\]
or
\[
  •X
  X_t\cmp g_t - Dg_t\cd X_t = \dot g_t.
\]
Conversely, if $X_t$ solves the latter equation, then its flow $\ph_t$ solves the original conjugacy problem.

We apply this scheme to the family $g_t = \Lm + th$. So
\[
  \dot g_t = h, \qq 0\le t\le 1.
\]
A \emph{formal solution} is then given by
\[
  X_t = -\sum_{i\ge1} Dg_t^{-i}\cd h \cmp g_t^{i-1}  
  \qq\text{or}\qq
  X_t = \sum_{i\le0} Dg_t^{-i}\cd h \cmp g_t^{i-1},
\]
as one immediateley verifies by direct calculation.
To obtain a \emph{convergent solution} we use the hyperbolic structure of~$g$ and the corresponding splitting of~$h$. Then
\[
  •X-def
  X_t = \sum_{i\ge1} Dg_t^{-i}\!\cd h^s\cmp g_t^{i-1} - \sum_{i\le0} Dg_t^{-i}\!\cd h^u \cmp g_t^{i-1}
\]
does the job.

More precisely, using an \m{E^m}-bump function and dropping the $t$ from the notation we replace $g$ by a global diffeomorphism~$\gt$ such that $\gt=g$ on some neighbourhood of the origin and $\gt=\id$ outside a somewhat larger compact cube~$K$. This cube can be chosen so that each \m{g}-orbit starting inside $K$ converges exponentially fast to $E^u\cap K$ and $E^s\cap K$ in forward and backward time, respectively. 

\def\nno#1{\nn{#1}^{\text{o}}}

Indeed, for the stable component $\Lm_s$ of $Dg(0)$ we can choose an operator norm $\nno\cd$ so that
\[
  \nno{\Lm_s} < 1,
  \qq
  \nn{\Lm_s}^m_{a,r} = \nno{\Lm_s} r.
\]
Then we can fix $K$ and $r>0$ so that
\mms $
  \nn{g}^m_{K,r} \le \th r
$
with some $0<\th<1$. It follows with Theorem~\ref{comp-gen} that
\[
  \nn*{h^s\cmp g^{\smash{i}}}^m_{K,r} \le \nn{h^s}^m_{g^i(K),\th^ir}, \qq i\ge0.
\]
As $g^i(K)$ converges to $E^u\cap K$ and $h^s$ is flat on $E^u$ it follows that 
\[
  \nn*{h^s\cmp g^{\smash{i}}}^m_{K,r} = O(\th^{iN})
\]
for any $N\ge1$. Therefore, the first sum converges with respect to~$\nn\cd^m_{K,r}$. An analogous argument applies to the second sum. Consequently, ~\eqref{X-def} defines a \textit{bona fide} solution to equation~\eqref{X} in some neighbourhood of the origin, which is as smooth as any single term in its representation.

As $m$ is supposed to be log-convex, all forward and backward iterates of $\gt$ are of class~$E^m$, as is their composition with $h^s$ and~$h^u$. Moreover, the differential of $\gt$ and its inverse are of class~$E^{\mp}$, as is any product of those. Thus we conclude that the vector field $X$ thus constructed is of class $E^{\mp}$. 

This holds uniformly for each $0\le t\le1$, which we dropped from the notation, while the dependence on $t$ is analytic by construction. Therefore, the time-\m{1}-map $\ph$ of $X$ is a diffeomorphism of class~$E^{\mp}$, which solves the problem of removing the flat term~$h$ near the origin.
\end{proof}

\appendix

\section{Basic facts}  \label{app-1}

For the inconvenience of the reader we collect some basic and well known properties of the spaces~$E^m$ which are determined by the asymptotic behaviour of the weight sequence~$m$. 

\begin{lem} \label{a-prop}
Let 
\mms
$\al=\liminf m_n\rn$.
\enum[z]
\item•1
If\/ $\al=0$, then $C^\om \setm E^m\ne\eset$.
\item•2
If\/ $\al>0$, then $C^\om \subset E^m$, and vice versa.
\item•3
If\/ $\al=\iny$, then $C^\om \subsetneq E^m$.
\endenum
\end{lem}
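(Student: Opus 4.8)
The plan is to handle the three asymptotic regimes separately, proving the first statement first, since the ``vice versa'' in the second and the strict inclusion in the third both reduce to it. The only ingredients I would use are that $C^\om=E^{(1)}$ with $(1)=(1)_{n\ge1}$, that a function lies in $E^m$ near a point iff its weighted Taylor series there converges on some polydisc, and that a function is analytic near a point iff the $n$-th roots of the moduli of its Taylor coefficients there have a finite $\limsup$.

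For the first statement, assuming $\alpha=\liminf_n m_n\rn=0$, I would pick a strictly increasing index sequence $n_1<n_2<\cdots$ with $m_{n_j}^{1/n_j}\le 2^{-j}$ — possible precisely because the $\liminf$ vanishes — and form the lacunary series $f(x)=\sum_{j\ge1}m_{n_j}^{1/2}\,x^{n_j}$. It has infinite radius of convergence, since $(m_{n_j}^{1/2})^{1/n_j}=(m_{n_j}^{1/n_j})^{1/2}\to0$, so $f\in C^\om$; but its Taylor coefficients at the origin are $f_{n_j}=m_{n_j}^{1/2}$ (and $0$ otherwise), and for every $r>0$
\[
  \frac{|f_{n_j}|}{m_{n_j}}\,r^{n_j}
  = \left(\frac{r}{(m_{n_j}^{1/n_j})^{1/2}}\right)^{\!n_j}
  \ge \bigl(r\,2^{j/2}\bigr)^{n_j}\longrightarrow\iny,
\]
so the weighted Taylor series of $f$ at $0$ diverges for \emph{every} $r>0$; hence $f$ is not of class $E^m$ even at the origin, and $f\in C^\om\setm E^m$.

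For the second statement, if $\alpha>0$ then, since only the tail governs the $\liminf$ and the first finitely many $m_n\rn$ are positive, in fact $c:=\inf_n m_n\rn>0$, i.e.\ $m_n\ge c^n$ for all $n$; then for $f\in C^\om$ and any point $a$, picking $U$ and $R>0$ from analyticity with $\sup_{x\in U}\sum_n|f_n(x)|R^n<\iny$ gives $\nn{f}^m_{U,cR}\le\sup_{x\in U}\sum_n|f_n(x)|R^n<\iny$, whence $f\in E^m$ and, $a$ being arbitrary, $C^\om\subset E^m$. Conversely, if $\alpha=0$ the first statement yields some $f\in C^\om\setm E^m$, so $C^\om\not\subset E^m$; this is the equivalence. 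For the third statement, if $\alpha=\iny$ then $\alpha>0$, so $C^\om\subset E^m$ already, and it remains to exhibit $f\in E^m\setm C^\om$: here I would take the characteristic $E^m$-function $\eta$ of Lemma~\ref{char-func} at a point $o$, whose Taylor coefficients there satisfy $|\eta_n(o)|=s_n\ge m_n$, so $\limsup_n|\eta_n(o)|\rn\ge\limsup_n m_n\rn=\iny$; thus the Taylor series of $\eta$ at $o$ has radius of convergence $0$, $\eta$ is not analytic at $o$, and $C^\om\subsetneq E^m$.

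None of the three arguments is deep, and I would expect the only real friction to be bookkeeping around the definition of $E^m$ — that it is a \emph{local} condition phrased on the Taylor \emph{coefficients} with the constant term dropped — together with two elementary but essential remarks: that $\inf_n m_n\rn>0$ follows from $\liminf_n m_n\rn>0$ because only the tail of the sequence matters, and that the lacunary series in the first statement really is a genuine entire function, so that it is a \emph{bona fide} element of $C^\om$ rather than a merely formal object.
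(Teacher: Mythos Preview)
Your proof is correct and follows essentially the same route as the paper: a lacunary series for~(1), direct comparison of Taylor norms for the forward direction of~(2), and the characteristic function of Lemma~\ref{char-func} for~(3). The only cosmetic differences are that the paper takes the simpler lacunary series $\sum_i x^{n_i}$ with unit coefficients (analytic on the unit disc rather than entire), and proves the converse in~(2) directly by observing that $1/(x+\j)\in C^\om$ forces $\sum_n x^n/m_n$ to converge, rather than going through the contrapositive via~(1).
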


\begin{proof}
For simplicity we consider functions on some interval around~$0$.

\zg1.
If $\al=0$, then $m_{n_i}^{1/n_i}\to0$ for some subsequence. Then the function 
\mmx
$f = \sum_{i\ge1} x^{n_i}$
\[
  f = \sum_{i\ge1} x^{n_i}
\]
is analytic around~$0$, but 
\[
  \Md_0f = \sum_{i\ge1} \frac{x^{n_i}}{m_{n_i}}
\]
is not. Hence $f$ is an element of $C^\om \setm E^m$.

\zg2.
If $f\in C^\om$, then its power series expansion at any point has a radius of convergence which is locally bounded away from zero, whence
\[
  \limsup_{n\ge1} \n{f_n}\rn \le R < \iny
\]
locally uniformly. Hence also
\[
  \limsup_{n\ge1} \pas{\frac{\n{f_n}}{m_n}}\rn
  \le \frac{\limsup \n{f_n}\rn}{\liminf\, m_n\rn}
  \le \frac{R}{\al}
  < \iny.
\]
So the radius of convergence of $\Md^m_a\f$ is also locally bounded away from zero, whence $f\in E^m$. 
---
Conversely, if $C^\om\subset E^m$, then in particular
\[
  f = \frac{1}{x+\j} = \sum_{n\ge0} \j^{n-1}x^n
\]
is in~$E^m$. We conclude that
\[
  \Md^m_0\f = \sum_{n>0} \frac{x^n}{m_n}
\]
has a positive radius of convergence. This implies that $\al>0$.

\Zg3.
We have $C^\om\subset E^m$ by \ref{2}. On the other hand, by Lemma~\ref{char-func} there exists for any given point a characteristic function $f$ in $E^m$ such that at this point,
\[
  \n{f_n} \ge m_n, \qq n\ge1.
\]
As
\[
  \liminf_{n\ge1} \n{f_n}\rn \ge \liminf_{n\ge1} m_n\rn = \iny,
\]
its Taylor series has no positive radius of convergence, hence $f$ is not analytic.
\end{proof}

Some further properties relate to the asymptotic behaviour of the associated derivative weights $M_n = n\fac m_n$. 

\begin{lem} \label{A-prop}
Let 
\mms 
$A = \liminf M_n\rn$.
\enum[z]
\item•1
If\/ $A>0$, then $E^m \supset E^\om$, the space of entire functions.
\item•2
If\/ $A<\iny$, then $E^m \subset E^\om$.
\item•3
If\/ $A = \iny$, then 
\mms $E^m = E^{\mb}$,
where $\mb$ is the largest weakly log-convex minorant below $m$.
\endenum
\end{lem}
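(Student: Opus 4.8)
The plan is to read all three parts as comparisons of $E^m$ with the reference class $E^\om$, which I take to be the $E$-class with constant derivative weight: $E^\om=E^{m^\om}$ with $m^\om=(1/n\fac)$, i.e. $M^\om_n\equiv1$. This is the smallest of the classes in play -- the entire functions of exponential type, sitting inside $C^\om=E^{(1)}$ -- and is the natural reference point. The only mechanism needed for the two inclusions is the trivial half of the equivalence $\mt\msub m\iff E^{\mt}\subseteq E^m$ recorded in section~\ref{basic}: if $\mt_k\le\lm^km_k$ then $\nn{f}^m_{a,r}\le\nn{f}^{\mt}_{a,\lm r}$, so $E^{\mt}\subseteq E^m$, a step that uses no characteristic functions. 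Since the factorials cancel, $\mt\msub m$ means $\tilde M_n\le\lm^nM_n$ at the level of derivative weights, and against $M^\om\equiv1$ this reduces to a single inequality on $M_n$.

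For the first part, $m^\om\msub m$ amounts to $1\le\lm^nM_n$, that is $M_n\rn\ge1/\lm$ for all $n$. As $A=\liminf M_n\rn>0$, fix $\lm$ with $1/\lm<A$; then $M_n\rn\ge1/\lm$ holds for all large $n$, and enlarging $\lm$ absorbs the finitely many small indices. Hence $m^\om\msub m$ and $E^\om=E^{m^\om}\subseteq E^m$. For the second part the same computation reversed gives $m\msub m^\om\iff M_n\le\lm^n\iff M_n\rn\le\lm$, so now I need $M_n\rn$ bounded. Here the weakly log-convex setting is used: $M_n=\mu_1\cbots\mu_n$ with $\mu_n$ increasing, so $M_n\rn\up A$, and $A<\iny$ forces $M_n\rn\le A$ for every $n$; taking $\lm\ge A$ gives $m\msub m^\om$ and $E^m\subseteq E^{m^\om}=E^\om$. (Without monotonicity of $M_n\rn$ the $\liminf$ would have to be a $\limsup$, which is exactly why the clean statement lives in the log-convex world.)

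The third part splits into a trivial and a substantial inclusion. Since $\Mb$ is a minorant of $M$ we have $\mb\le m$, hence $\mb\msub m$ with $\lm=1$ and $E^{\mb}\subseteq E^m$ by the same trivial step; this needs nothing about $A$. The reverse inclusion $E^m\subseteq E^{\mb}$ is the classical regularization theorem of Bang and Mandelbrojt~\cite{Ban-46,Man-52}, and I would prove it not coefficientwise but through the derivative sup-norms $N_n\defeq\sup_{x\in U}|D^nf(x)|$. Writing $\log\Mb$ as the lower convex hull of $n\mapsto\log M_n$, on each edge $\Mb_n=M_{n_j}^{\theta}M_{n_{j+1}}^{1-\theta}$ is the geometric interpolation between two contact indices $n_j<n_{j+1}$ where $\Mb$ and $M$ agree. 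Feeding a Cartan--Gorny interpolation inequality of the form $N_n\le K^nN_{n_j}^{\theta}N_{n_{j+1}}^{1-\theta}$ into the hypothesis $N_{n_j}\le Ch^{n_j}M_{n_j}$ and its analogue at $n_{j+1}$ yields $N_n\le C'h'^n\,M_{n_j}^{\theta}M_{n_{j+1}}^{1-\theta}=C'h'^n\Mb_n$, which is membership in $E^{\mb}$. The assumption $A=\iny$ enters to guarantee that $\Mb_n\rn\up\iny$ as well, so that the contact indices run off to infinity, every $n$ sits on a genuine edge, and $\mb$ is a bona fide weakly log-convex weight rather than a degenerate, eventually geometric one.

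The main obstacle is precisely this reverse inclusion. Parts one and two are one-line estimates once $E^\om$ is identified with the constant derivative weight, and the easy half of part three is the same estimate. What carries real content is that passing to the largest log-convex minorant does not shrink the class, and this cannot be seen at the level of Taylor coefficients: at an index where $m$ bumps above $\mb$ a single coefficient $f_k(a)$ may be large without leaving $E^m$, yet it violates every $\mb$-bound, so coefficientwise the inclusion fails. It is only for an actual smooth function that the convexity of its derivative sup-norms forces $N_n$ onto the convex hull, and capturing this is the crux. I would therefore spend most of the effort setting up the interpolation inequality with its geometric constant $K^n$ and verifying that the contact indices of the convex hull are unbounded when $A=\iny$.
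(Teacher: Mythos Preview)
Parts~1 and~3 are sound and align with the paper's approach; in particular your interpolation argument for Part~3 is exactly what the paper sketches via Landau--Kolmogorov. The gap is in Part~2.

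You invoke weak log-convexity of $M$ to get $M_n\rn$ increasing, so that $A=\liminf M_n\rn<\iny$ forces $\sup_n M_n\rn<\iny$ and hence the pure weight comparison $m\msub m^\om$. But this lemma is \emph{not} stated under the General Assumption: Part~3 is only non-trivial when $m$ fails to be weakly log-convex (otherwise $\mb=m$ and there is nothing to prove), and the neighbouring Lemma~\ref{not-comp-closed} treats the case $A=0$, which is impossible for an increasing positive sequence $M_n\rn$. Your own argument is internally inconsistent here, since for Part~3 you correctly treat $m$ as general. For a general weight the hypothesis $A<\iny$ yields only $M_n\le b^n$ for \emph{infinitely many}~$n$, and the coefficientwise inclusion $m\msub m^\om$ can fail outright --- so your route does not reach $E^m\subset E^\om$.

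The paper's remedy is precisely the tool you already deploy for Part~3. From $f\in E^m$ one has $\nn*{f^{(n)}}_U\le M_nr^n$ for all~$n$, hence $\nn*{f^{(n)}}_U\le c^n$ along the subsequence where $M_n\le b^n$; Landau--Kolmogorov interpolation then pushes this bound to \emph{all}~$n$, which is membership in~$E^\om$. So Part~2, like Part~3, cannot be done by a weight comparison alone --- it needs the convexity of the derivative sup-norms on the function side. Your parenthetical remark that the clean statement ``lives in the log-convex world'' is therefore backwards: the $\liminf$ formulation holds for arbitrary weights precisely because interpolation supplies the missing convexity on the side of~$f$ rather than of~$m$.
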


\begin{proof}
\zg1.
If $A>0$, then 
$
  M_n \ge a^n
$
for all $n\ge1$ with some $a>0$, hence
\[
  m_n \ge \frac{a^n}{n\fac}, \qq n\ge1.
\]
It follows that $E^m \supset E^{(1/n\fac)} = E^\om$.
\q
\zg2.
For $f\in E^m$ we locally have
\[
  \nn*{f^{(n)}}_U \le M_nr^n, \qq n\ge1,
\]
with some $r>0$.
If $A<\iny$, then $M_n\le b^n$ for infinitely many~$n$, hence
\[
  \nn*{f^{(n)}}_U \le c^n
\]
for infinitely many $n$ with some $c>0$.
By the Landau-Kolmogorov inequalities~\cite{Che-93} this then also holds for \emph{all~$n$} with some larger~$c$. Thus,
\[
  \nn{f_n}_U \le \frac{c^n}{n\fac}, \qq n\ge1.
\]
It follows that $f\in E^{(1/n\fac)}=E^\om$.

\zg3.
This follows from the Landau-Kolmogorov interpolation inequalities 
\[
  \nn*{D^{\lm p+(1-\lm)q}f} \le c_{p,q,\lm} \nn{D^p\f}^\lm \nn{D^q\f}^{1-\lm},
\]
where $\lm p+(1-\lm)q$ is any integer between $p$ and~$q$, and the fact that $M$ must coincide with $\Mb$ at infinitely many points~\cite{Che-93}.
\end{proof}

\begin{lem} \label{not-comp-closed}
If\/ $A=\liminf M_n\rn<\iny$, then $E^m$ is not stable under composition.
\end{lem}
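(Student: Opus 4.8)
The hypothesis $A=\liminf M_n\rn<\iny$ forces the weight $m$ to be very small — of size $1/n\fac$ up to an exponential factor — and for such small weights composition closure fails. Indeed, by Lemma~\ref{A-prop} the assumption squeezes $E^m$ inside $E^\om$, and $E^\om$ cannot be composition closed: substituting the polynomial $x^2$ into the exponential yields $\e^{x^2}$, whose even-order Taylor coefficients grow factorially and so keep it out of $E^\om$. The plan is to turn this into a clean, self-contained argument by exhibiting one explicit pair $f,g\in E^m$ with $f\cmp g\notin E^m$.

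First I would make the assumption usable. By the General Assumption the sequence $M_n\rn$ is increasing, so $A=\liminf M_n\rn=\sup_{n\ge1}M_n\rn$; hence $A<\iny$ is equivalent to the pointwise bound $M_n\le A^n$ for every $n\ge1$, that is, $m_n\le A^n/n\fac$. (The monotonicity of $M_n\rn$ is exactly what upgrades the statement about the $\liminf$ to a bound valid for all $n$; this is the content of the inclusion $E^m\subset E^\om$ in Lemma~\ref{A-prop}.)

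Next I would check the example $g(x)=x^2$ and $f(x)=\e^x$. Being a polynomial, $g$ lies in every space $E^m$, and $f$ lies in $E^m$ because, near the origin,
\[
  \nn{f}^m_{0,r}=\sum_{n\ge1}\frac{r^n}{n\fac\,m_n}=\sum_{n\ge1}\frac{r^n}{M_n}\le\sum_{n\ge1}\Bigl(\frac{r}{\mu_1}\Bigr)^{\!n}<\iny
  \qquad (r<\mu_1),
\]
using $M_n=\mu_1\cbots\mu_n\ge\mu_1^n$, and the same estimate holds locally uniformly. The composition is $f\cmp g=\e^{x^2}=\sum_{k\ge0}x^{2k}/k\fac$, whose Taylor coefficients at the origin satisfy $(f\cmp g)_{2k}=1/k\fac$. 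Feeding in $m_{2k}\le A^{2k}/(2k)\fac$ together with the elementary inequality $(2k)\fac/k\fac=(k{+}1)\cbots(2k)>k^k$ gives
\[
  \nn{f\cmp g}^m_{0,r}\ge\sum_{k\ge1}\frac{(2k)\fac}{k\fac\,A^{2k}}\,r^{2k}\ge\sum_{k\ge1}\Bigl(\frac{kr^2}{A^2}\Bigr)^{\!k},
\]
and for any fixed $r>0$ the terms of the last series tend to $\iny$. Hence $\nn{f\cmp g}^m_{0,r}=\iny$ for \emph{every} $r>0$, so $\e^{x^2}\notin E^m$, and $E^m$ is not stable under composition.

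I expect no real obstacle here: the computation is routine and the only delicate point is the first step, where the General Assumption is used to convert $\liminf M_n\rn<\iny$ into the uniform bound $M_n\le A^n$. One could shorten the argument by simply quoting Lemma~\ref{A-prop}\,(2) for $E^m\subset E^\om$ and then noting $\e^{x^2}\notin E^\om$, but the direct estimate above keeps everything self-contained.
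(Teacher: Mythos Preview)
Your argument is clean, and substituting $x^2$ into $\exp$ gives a tidier counterexample than the paper's $\exp\cmp\exp$. Under the General Assumption it is entirely correct. The issue is one of scope: this lemma sits in Appendix~\ref{app-1} and is invoked in section~\ref{basic} precisely to \emph{motivate} the General Assumption, so that assumption cannot be taken for granted here. The paper's own proof confirms this reading by splitting into the cases $A>0$ and $A=0$ --- the latter being vacuous once $M_n\rn$ is increasing, since then $A\ge M_1>0$.

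Without the General Assumption two steps break. First, $\liminf M_n\rn<\iny$ only yields $M_n\le(A{+}1)^n$ along a subsequence, which need not meet the even indices $2k$ your final estimate requires; for $A>0$ this gap can be closed by the alternative route you already sketch, passing through $E^m\subset E^\om$ via Lemma~\ref{A-prop}\,(2). Second --- and this is the real omission --- when $A=0$ the exponential need not lie in $E^m$ at all: with $M_n=1/n\fac$, for instance, $\nn{\exp}^m_{0,r}=\sum_{n\ge1} n\fac\,r^n=\iny$ for every $r>0$, so your pair $(f,g)$ is simply unavailable. The paper handles $A=0$ by replacing $\exp$ with a tailored function $e=\sum_n\ep_n^n x^n/n\fac$, where $\ep_n\down0$ is chosen so that $M_n\le\ep_n^n$ with equality infinitely often, and then argues that $e\cmp e\notin E^m$.
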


\begin{proof}
If $A > 0$, then $\exp\in E^m$. But if $A<\iny$, then
$\exp\cmp\exp\notin E^m$, because its \m{n^2}-Taylor coefficients are larger than $(1/n\fac)^{n+1}$, which is not of the order of $1/(n^2)\fac$ \cite{Ban-46}.

If $A=0$, on the other hand, then there are $\ep_n\down0$ such that
\mmx
$M_n\le \ep_n^n$ for all~$n$
\[
  M_n \le \ep_n^n, \qq n\ge0,
\]
with equality holding for infinitely many~$n$. Then
\[
  e = \sum_{n\ge0} \frac{\ep_n^n}{n\fac} x^n \in E^m,
\]
but $e\cmp e\notin E^m$ by a similar calculation. 
\end{proof}

\section{Examples of weights}  \label{a-xmp}

\begin{xmp}  \label{asm-not-fdb}
There are weights, which are \asm, but not \fdb. 
\end{xmp}

Hence there are \m{E^m}-spaces, which are \m{C^\om}-closed, but not \m{E^m}-closed. These are obviously supersets of $C^\om$. This seems to be a new observation.

\begin{proof} \def\M-#1{m_{#1}^{1/#1}} \def\MM-#1{M_{#1}^{1/#1}} 
We first construct an almost increasing sequence $m$ depending on parameters~$\lm_1\le \lm_2\le \bots$. We subsequently choose them so that $m$ is not~\fdb.

Beginnig with $\nb_1=1$ and $\mu_1=1$, we proceed by induction and assume that we already determined $\nb=\nb_{n-1}$ and
$\mu_1\le\mu_2\le\bots\le\mu_{\nb}$ 
such that
\[
  \6
  \M-k \le 8\M-l, \qq 1\le k\le l\le\nb.
\]
We then set
\[
  \mu_k = \lm_nk, \qq \nb<k\le n,
\]
with some $\lm_n\ge 8\M-\nb$ and $n$ so large that
\mms $
  \MM-n \ge \mu_n/4
$.
Subsequently we set
\[
  \mu_k = 2^6\ceil1{{k/n}}\MM-n, \qq n<k\le n^2.
\]

Obviously, $\mu_k$ is increasing for $\nb\le k\le n^2$. Similarly, $\M-k$ is increasing for $\nb\le k\le n$ by choice of~$\lm_n$. Otherwise we observe that 
\[
  M_{in}
  = 2^{6(i-1)n}i\fac^nM_n^i, \qq 2\le i\le n.
\]
As
\[
  \frac{i\fac^n n\fac^i}{(in)\fac}
  = \pas{\frac{\smash{i\fac^{1/i}\,n\fac^{1/n}}}{(in)\fac^{1/in}}}^{in}
  \ge \pas{\frac{1}{2\e}}^{in}
\]
by Stirling's inequality, we conclude that
\[
  m_{in}
  = \frac{M_{in}}{(in)\fac}
  \ge 8^{in} \frac{i\fac^n M_n^i}{(in)\fac} 
  = 8^{in} \frac{i\fac^n n\fac^i}{(in)\fac} m_n^i
  \ge m_n^i,
  \qq
  2\le i\le n,
\]
and therefore
\[
  \M-{n} \le \M-{in}, \qq 1\le i\le n. 
\]
With~\eqref{mkml} this implies that \6 now holds for $1\le k\le l\le n^2$.
Setting $\nb_n=n^2$ this completes the inductive construction of the weight~$m$.

Now consider the \fdb-property. For $k_1=\bots=k_n=n$, we have
\[
  \frac{m_nm_{k_1}\bots m_{k_n}}{m_k}
  =   \frac{m_n^{n+1}}{m_{n^2}}
  =   \frac{(n^2)\fac}{(n\fac)^{n+1}} \frac{M_n^{n+1}}{M_{n^2}}
  \ge   2^{-6n^2}\frac{(n^2)\fac}{(n\fac)^{2n}} \frac{M_n}{n\fac}.
\]
As
\[
  \frac{n^2\fac}{n\fac^{2n}}
  = \pas{\frac{\smash{n^2\fac^{1/n^2}}}{n\fac^{2/n}}}^{n^2}
  \ge \pas{\frac{\e}{4}}^{n^2},
\]
we conclude that
\[
  \frac{m_nm_{k_1}\bots m_{k_n}}{m_k}
  \ge 2^{-7n^2} m_n. 
\]
Choosing the $\lm_n$ and hence the $m_n$ to increase suffciently fast, the right hand side increases faster than any power of $n^2$. Hence, the sequence $m$ is not~\fdb.
\end{proof}

\begin{xmp}  \label{fdb-not-log}
There are weights, which are strictly \fdb, but which are not equivalent to any log-convex weight.
\end{xmp}

Hence log-convexity is not necessary to get stability under composition. 
See also~\cite{RaS-15} for an entirely different example of this kind.

\begin{proof}
If two weights $m$ and $\mt$ are equivalent, then also their successive quotients $\al_n=m_{n}/m_{n-1}$ and $\t\al_n=\mt_{n}/\mt_{n-1}$ form equivalent sequences. If $\mt$ is log-convex, these $\t\al_n$ are increasing, hence for an equivalent weight $\al_{n+1}/\al_n$ can not approach zero faster than exponentially.
But it is easy to construct block-convex weights where this is the case. So these weights are strictly \fdb by Lemma~\ref{prop-fdb}, but not log-convex.
\end{proof}

\begin{xmp}  \label{fdb-not-asm}
There are weights, which are strictly \fdb, but not \asm.
\end{xmp}

The corresponding space $E^m$ is thus a proper subset of $C^\om$, which is stable under composition, but not holomorphically stable.

\begin{proof}
An explicit example is 
\[
  m_n = \log^{-n}(1+n), \qq n\ge1.
\]
It is an elementary task to check that $m$ is weakly log-convex and log-anticonvex. Hence, 
\[
  m_km_l \le m_{k+1}m_{l-1}, \qq 1\le k<l-1.
\]
So condition~\ref{9} of Lemma~\ref{prop-fdb} needs only be checked for $1\le k\le l\le k+1$ -- which is another elementary calculation -- to show that $m$ is strictly \fdb. But $m$ is not \asm, since obviously
\mmx
$\lim m_n\rn = 0$.
\[
  \lim m_n\rn = 0.
  \qedhere
\]
\end{proof}


\begin{xmp}  \label{asm-not-diff}
There are weights, which are \fdb and \asm, but not diff-stable.
\end{xmp}


\begin{proof}
Any log-convex weight $m$ is \fdb and almost increasing, hence \asm. But if the $\mu_n$ increase fast enough so that
\mms$
  \mu_n\rn \to \iny
$,
then $m$ is not closed under differentiation.
\end{proof}

\section{The Cauchy-Kowalewskaya theorem}  \label{s-CK}

Reduced to normal form the problem is to find a solution to
\[
  •ck
  \del_su = c_0(\xs,u) + \sum_{1\le i<s} c_i(\xs,u)\del_iu, 
  \qq
  \eval2{u}_{x_s=0} = 0,
\]
in a neighbourhood of the origin in \m{s}-space, where $\xs=(x_1,\bots,x_{s-1})$,
\[
  u = u(x) = (u_1(x),\bots,u_{\vq}(x)),
\] 
and $c_0$ and $c_1,\bots,c_{s-1}$ are defined in a neighborhood of the origin in \m{s+\vq}-space and take values in \m{\vq}-space and \m{\vq\x\vq}-space, respectively. 

\begin{thm}
Suppose \eqref{ck} has a smooth solution~$u$. If the coefficients $c_0,c_1,\bots,c_{s-1}$ are of strict \fdb class~$E^m$ in some neighborhood of the origin in \m{s}-space, then $u$ is also of class~$E^m$.
\end{thm}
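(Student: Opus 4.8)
The plan is to mimic the strategy already used for the flow theorem (Theorem~\ref{ode}): convert the PDE into a recursion for the Taylor coefficients at the origin, dominate that recursion coefficient-wise by an \emph{analytic} majorant problem built from the weighted Taylor series of the coefficients, solve the majorant problem by the classical Cauchy-Kowalewskaya argument, and finally transfer the local bound to a neighbourhood and patch. Throughout I would use that a strictly \fdb weight makes $E^m$ an algebra which is closed under composition (Lemma~\ref{prop-fdb} together with Theorem~\ref{comp-gen}, applied with $w=m$), and that it behaves well under the Main Lemma~\ref{comp-wm}.

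First I would expand $u=\sum_{k\ne0}u_kx^k$ and plug into \eqref{ck}. Differentiating the formal Taylor series in $x_s$ and in $x_i$ just shifts indices, and composing $c_j(\xs,\cdot)$ with $u$ is a composition of formal power series; so the equation $\del_su=c_0(\xs,u)+\sum_{i<s}c_i(\xs,u)\del_iu$ with $u|_{x_s=0}=0$ determines all the $u_k$ recursively, the coefficient of $x_s^{k_s}$ on the left being $(k_s+1)u_{\dots,k_s+1}$ and the right-hand side involving only coefficients $u_{k'}$ with $k'_s\le k_s$ — this is exactly the Cauchy-Kowalewskaya triangular structure. Next I would pass to weighted series: applying the Main Lemma to each composition $c_j\cmp(\xs,u)$, and using that $m$ is strictly \fdb so that $m_rm_{k_1}\cbots m_{k_r}\le\lm^km_k$, together with weak submultiplicativity to absorb the products $c_i\cdot\del_iu$, I would obtain a coefficient-wise majorant relation
\[
  \Md^m_0 u \cle U,
\]
where $U$ is the formal solution of a model problem of the same shape,
\[
  \del_s U = C_0(\xs,U)+\sum_{1\le i<s}C_i(\xs,U)\del_iU,
  \qq
  U|_{x_s=0}=0,
\]
in which each $C_j$ is an analytic function (a fixed rescaling of $\Md^m_0c_j$, obtained from $\nn{c_j}^m_{0,r}<\iny$) — so $U$ is analytic near the origin by the classical Cauchy-Kowalewskaya theorem.

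From analyticity of $U$ at $0$ I get a radius $r>0$ with $\nny{U(r,\bots,r)}<\iny$, hence $\nn{u}^m_{0,r}\le\nny{U(r,\bots,r)}<\iny$, so $u\in E^m$ in a neighbourhood of the origin. Since $\nn{c_j}^m_{a,r}$ is locally uniformly bounded and $u$ is continuous, a standard shrinking of the neighbourhood upgrades this to $\nn{u}^m_{U_0,r}<\iny$ on a full neighbourhood $U_0$ of the origin, which is the definition of $u\in E^m$ there. (Unlike Theorem~\ref{ode} there is no flow to patch along; if one wants $E^m$-regularity on the whole domain where the smooth solution exists, one writes that domain as a union of small boxes and uses that $E^m$-regularity at interior points propagates by the same local argument recentred — or one simply states the result locally, as the theorem does via ``in some neighborhood of the origin''.)

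The main obstacle I expect is bookkeeping in the majorization step, not the Cauchy-Kowalewskaya part: one must handle (i) the extra factor $\del_iu$ multiplying $c_i(\xs,u)$, which means majorizing a \emph{product} and thus needs weak submultiplicativity $m_km_l\le 2^{k+l}m_{k+l}$ to keep the weighted estimate closed, costing a harmless geometric rescaling; (ii) the mixed role of the $s-1$ ``analytic'' directions $\xs$ versus the distinguished direction — strictly speaking $c_j$ are functions of $s+\vq$ variables and one should view the weight $m$ on all of them by the dummy-coordinate convention of Section~\ref{comps}; and (iii) checking that the constants $\lm$ from the strict \fdb inequality and from weak submultiplicativity combine into a single rescaling $x\mapsto\lm x$ that can be absorbed before invoking the scalar majorant problem. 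Once the majorant problem is set up with honest analytic data, its solvability is the textbook Cauchy-Kowalewskaya theorem and requires no new idea.
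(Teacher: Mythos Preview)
Your proposal is correct and follows essentially the same route as the paper: pass to weighted Taylor series via the Main Lemma, set up an analytic majorant Cauchy--Kowalewskaya problem with coefficients $M_0c_j$, and compare coefficients recursively. The paper packages your bookkeeping points (i)--(ii) as two short extensions of the Main Lemma --- a product rule $M_a(gh)\cle M_ag\,M_ah$ obtained directly from the strict \fdb inequality with $r=2$ (slightly cleaner than invoking weak submultiplicativity), and a ``partial composition'' rule $\Md_a(g\pcmp h)\cle \Md_bg\pcmp\Md_ah$ handling the mixed $(\xs,u)$-dependence --- but the substance is identical.
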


\begin{proof}
We need the following two extensions of the Main Lemma. First we need to consider products of smooth functions. Assuming without loss that $m_l\ge1$ for all $l$ with $\n{l}=2$, we have
\[
  m_{k_1}m_{k_2} \le m_{l}m_{k_1}m_{k_2} \le m_{k_1+k_2}
\]
for a strict \fdb weight. This implies that
$
  M_a(gh) \cle M_ag M_ah
$.
Note that here we have to include the constant terms.

Second, we need to consider `partial composition'.
Suppose $g=g(x,w)$ and $h=h(x)$ are such that
$
  (g\pcmp h)(x) \defeq g(x,h(x))
$
is well defined. We then have
\[
  \Md_a(g\pcmp h) \cle \Md_bg \pcmp \Md_ah, \qq b=(a,h(a)).
\]
This follows from the Main Lemma by extending $h$ to a map which is the identity in the \m{x}-coordinates.

We now proceed as in the proof of Theorem~\ref{ode} and expand both sides of the differential equation~\eqref{ck} into their formal power series at the origin. We get
\[
  \del_s(T_0u)
  = T_0(\del_su)
  = T_0c_0(\xs,u) + \sum_{1\le i<s} T_0(c_i(\xs,u)\del_iu).
\]
Using the Main Lemma and the preceding remarks to pass to their weighted majorants, we get
\[
  \del_s(\Md_0u)
  \cle M_0c_0\pcmp \Md_0u + \sum_{1\le i<s} (M_0c_i\pcmp\Md_0u)(\del_i\Md_0u).
\]
Here, the coefficients $M_0c_0,\bots,M_0c_{s-1}$ are all analytic around the origin by assumption. Hence, by the Cauchy-Kowalewskaya theorem there exists an analytic solution $v$ to
\[
  \del_sv = M_0c_0(\xs,v) + \sum_{1\le i<s} M_0c_i(\xs,v)\del_iv, 
  \qq
  \eval2{v}_{x_s=0} = 0.
\]
By recursive comparison of coefficients we have
$
  \Md_0u \cle \Td_0v
$.
Hence, the smooth solution $u$ is of class $E^m$ near the origin.
\end{proof}

\goodbreak[6]

\end{document}